\newcommand{\margin}[1]{\marginpar{\parbox{0in}{\parbox{0.8in}{\color{blue} \raggedright \scriptsize #1}}}}
\theoremstyle{plain}
\newtheorem{lemma}{Lemma}[section]
\newtheorem{thm}[lemma]{Theorem}
\newtheorem{cor}[lemma]{Corollary}
\numberwithin{equation}{section}
\theoremstyle{remark}
\theoremstyle{definition}
\newtheorem{rmk}[lemma]{Remark}
\newtheorem{defin}[lemma]{Definition}
\DeclareMathOperator{\Div}{div}
\DeclareMathOperator{\tr}{tr}
\DeclareMathOperator{\Int}{Int}
\newcommand{\too}{\longrightarrow}
\newcommand{\rr}{\mathbb{R}}
\newenvironment{manualtheorem}[1]{%
  \manualtheoreminner
}{\endmanualtheoreminner}
\begin{document}
\title[Equality in the spacetime positive mass theorem II]{Equality in the spacetime positive mass theorem II}
\author{Lan-Hsuan Huang}
\address{Department of Mathematics, University of Connecticut, Storrs, CT 06269, USA}
\email{lan-hsuan.huang@uconn.edu}
\author{Dan A. Lee}
\address{CUNY Graduate Center and Queens College}
\email{dan.lee@qc.cuny.edu}
\thanks{The first named author was partially supported by the NSF CAREER Award DMS-1452477 and NSF DMS-2005588. The authors thank Harvard’s Center of Mathematical Sciences and Applications for their hospitality while some of this research was being carried out. 
}

\maketitle

\begin{abstract}
We provide a new proof of the equality case of the spacetime positive mass theorem, which states that if a complete asymptotically flat initial data set $(M, g, k)$ satisfying the dominant energy condition has null ADM energy-momentum (that is, $|E|=|P|$), then $(M,g)$ must isometrically embed into Minkowski space with $k$ as its second fundamental form. Previous proofs either used spinor methods~\cite{Witten:1981, Beig-Chrusciel:1996, Chrusciel-Maerten:2006}, relied on the Jang equation~\cite{Huang-Lee:2020, Eichmair:2013}, or assumed three spatial dimensions~\cite{Hirsch-Zhang}. In contrast, our new proof only requires knowing that $E\ge|P|$ for all complete initial data sets near $(g,k)$ on $M$ satisfying the dominant energy condition.

\end{abstract}

\section{Introduction}

Let us first explicitly state the definition of asymptotic flatness we will be using.
\begin{defin}\label{def:AF}
Let $n\ge3$, let $M$ be an $n$-dimensional connected manifold, possibly with boundary, and let $\frac{n-2}{2}<q<n-2$. We say that an initial data set $(M,g,k)$ is \emph{complete asymptotically flat with decay rate $q$} if the following hold: 
\begin{itemize}
    \item Each noncompact end of $M$ (of which there is at least one) is diffeomorphic to \hbox{$\rr^n\setminus \bar B_1(0)$}, and for some $0<\alpha<1$,
\begin{equation} \label{eqn:AF}
 (g-g_{\mathbb{E}},k)\in C^{2,\alpha}_{-q}\times C^{1,\alpha}_{-q-1},
 \end{equation}
%and there exists a compact set $K\subset  M$ such that $M\setminus K$ is a diffeomorphic to a disjoint union of Euclidean ends such that in the associated coordinate charts,
%\begin{align}
%    g_{ij}(x)&=\delta_{ij}+O_{}(|x|^{-q})\\
%    k_{ij}&= O_1(|x|^{-q-1}),
%\end{align}
%where, for example, $O_2(|x|^{-q})$ denotes a function $f$ such that $f+|x||Df|+|x|^2|D^2f|=O(|x|^{-q})$, 
where $g_{\mathbb{E}}$ denotes a background metric that is equal to the standard Euclidean metric on each $\rr^n\setminus \bar B_1(0)$ end, and $C^{2,\alpha}_{-q}$ and $C^{1,\alpha}_{-q-1}$ denote weighted H\"{o}lder spaces.
\item The energy density $\mu$ and current density $J$ defined by
\begin{equation} \label{eqn:constraints}
(\mu, J):=\left(\, \tfrac{1}{2}(R_g -|k|^2_g - (\tr_g k)^2)\, ,\, (\Div_g k)^{\sharp} - \nabla(\tr_g k)   \,\right)
\end{equation}
are integrable over $M$, where $(\Div_g k)^{\sharp}$ denotes the vector field dual to the one-form $\Div_g k$.
\end{itemize}

\end{defin}
Note that asymptotic flatness of $(g,k)$ implies that the ADM energy-momentum $(E,P)$ of each end is well-defined. Our main result characterizes the equality case $E=|P|$ of the spacetime positive mass theorem, but we would like to emphasize that, unlike all previous results characterizing $E=|P|$, 
ours follows from the positive mass inequality $E\ge|P|$ itself and does not rely on any particular proof of this inequality. Consequently, we introduce the following definition.

\begin{defin}
Let $(M, g, k)$ be a complete asymptotically flat initial data set with decay rate $q$, possibly with boundary. We say that \emph{the positive mass theorem is true near $(g, k)$} if there is an open ball around $(g, k)$ in $\left(g_{\mathbb E}+C^{2,\alpha}_{-q}\right)\times C^{1,\alpha}_{-q-1}$, for some $\alpha$, such that for each complete asymptotically flat initial data set $(\bar{g}, \bar{k})$ with decay rate $q$ in that open ball, satisfying the dominant energy condition and having weakly outer trapped boundary (if there is a boundary), we have $\bar E\ge |\bar P|$, where $(\bar E, \bar P)$ denotes the ADM energy-momentum of any asymptotically flat end of  $(M, \bar g, \bar k)$. 
\end{defin} 

We say the boundary $\partial M$ of an asymptotically flat manifold $M$ is  \emph{weakly outer trapped} if its outward null expansion $\theta^+:=  H + \tr_{\partial M} k$ satisfies $\theta^+\le0$, with respect to the normal that points toward $M$.\footnote{Our convention is that the mean curvature $H$ is the tangential divergence of the chosen unit normal vector. For example, the mean curvature of a round sphere in Euclidean space with respect to the outward normal is positive.}

It follows from E. Witten's proof of the positive mass theorem~\cite{Witten:1981} (and~\cite{GHHP:1983, Herzlich:1998} for the boundary case) that the positive mass theorem is true near any $(g, k)$ on a \emph{spin} manifold. Building on the classic works of R. Schoen and S.-T. Yau~\cite{Schoen-Yau:1979-pmt1, Schoen-Yau:1981-pmt2}, in joint work with M. Eichmair and Schoen we proved that the positive mass theorem is true\footnote{The definition of asymptotic flatness used in~\cite{EHLS:2016} is slightly different from the one stated here. 
By going through the proofs there with more care, one sees that Definition~\ref{def:AF} is adequate for proving the positive mass theorem. See~\cite{LLU} for details.} near any $(g, k)$ on a manifold of dimension less than 8~\cite{EHLS:2016}, with the boundary case proved by the second author with M. Lesourd and R. Unger~\cite{LLU}. J.~Lohkamp has also announced a proof in all dimensions~\cite{Lohkamp:2016}. 

Understanding the $E=|P|$ case of the spacetime positive mass theorem breaks down into two separate steps: The first step is to show that $E=|P|=0$, and the second step is to use the fact that $E=0$ to find an embedding of the initial data into Minkowski space. In the first paper in this series~\cite{Huang-Lee:2020},
we built on the work of R. Beig and P. Chru\'{s}ciel~\cite{Beig-Chrusciel:1996} (see also~\cite{Chrusciel-Maerten:2006}) in the spin case to construct an argument that proves the first step without appealing to any particular proof of the $E\ge|P|$ inequality. We state this result  below and note that it was generalized to include the possibility of a boundary in~\cite{LLU}.
\begin{thm}\label{thm:E=P}
Let $(M, g, k)$ be a complete asymptotically flat initial data set with decay rate~$q$, possibly with boundary, and assume that the positive mass theorem is true near $(g,k)$. Furthermore, we make the stronger asymptotic assumption that 
%\begin{align}
%    g_{ij}(x)&=\delta_{ij}+O_{2+\alpha}(|x|^{-q})\\
%    k_{ij}&= O_{1+\alpha}(|x|^{-q-1}),
%\end{align}
%in the asymptotically flat coordinate chart, for some $0<\alpha<1$, 
with this decay rate $q$, there exists $0<\alpha<1$ satisfying~\eqref{eqn:AF} such that 
\begin{align}\label{eq:decay}
q+\alpha>n-2,
\end{align}
and 
\begin{equation}
\label{eq:constraint_decay}
(\mu, J)\in C^{2,\alpha}_{-n-\delta},
\end{equation}
for some $\delta>0$, where $n$ is the dimension.  If $(g,k)$ satisfies the dominant energy condition and the boundary (if there is one) is weakly outer trapped, then for each asymptotically flat end, $E=|P|$ implies that $E=|P|=0$. 
\end{thm}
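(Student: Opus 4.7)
The natural line of attack is by contradiction: assume $E=|P|>0$ on some asymptotically flat end, and try to produce a nearby initial data set $(\bar g,\bar k)$ in the $C^{2,\alpha}_{-q}\times C^{1,\alpha}_{-q-1}$ neighborhood on which the positive mass theorem is assumed to hold, still satisfying DEC and the trapped boundary condition (if applicable), but with $(\bar E,\bar P)$ strictly outside the forward null cone, i.e.\ $\bar E<|\bar P|$. This would directly violate the PMT near $(g,k)$ and force the desired contradiction.

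I would start by invoking density and regularization results of the Schoen--Yau, Eichmair--Huang--Lee--Schoen, and Corvino--Schoen type, together with the enhanced decay hypotheses \eqref{eq:decay}--\eqref{eq:constraint_decay}, to reduce to the case where $(g,k)$ has harmonic asymptotics on the chosen end. After an asymptotic rotation, normalize so that $(E,P)=(E,0,\dots,0,E)$, a null vector; the task then becomes to produce a perturbation whose first variation $(\delta E,\delta P)$ satisfies $\delta E-\delta P_n<0$. The engine of the construction would be the constraint map $\Phi(g,k)=(2\mu,-J)$ and a Corvino--Schoen--Chru\'sciel--Delay style surjectivity statement for its linearization, modulo the cokernel spanned by KIDs. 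Concretely, I would seek a compactly supported symmetric pair $(h,w)$ whose asymptotic behavior contributes the prescribed variation to $(E,P)$ and whose induced $(\delta\mu,\delta J)$ is compatible with DEC; an implicit function argument would then upgrade this to a genuine initial data set $(\bar g,\bar k)$ with the desired ADM charges.

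The main obstacle I expect is preserving DEC under such a perturbation. A first-order variation that pushes $(E,P)$ into the spacelike region generically corresponds to a source with $\delta\mu-|\delta J|$ of indefinite sign, so DEC fails somewhere at leading order. I would try to overcome this either by exploiting slack in regions where $(g,k)$ has strict $\mu>|J|$, or by a higher-order correction chosen to restore DEC at the cost of only a negligible further change in $(E,P)$. The latter hinges on a quantitative Fredholm estimate for the linearized constraint operator on the weighted H\"older spaces adapted to \eqref{eq:decay}, together with control of the interaction between the null direction $\delta E=\delta P_n$ in KID space and the open condition $\delta\mu\ge|\delta J|$. The boundary condition $\theta^+\le 0$ can be maintained by localizing perturbations away from $\partial M$, since $\theta^+$ is a differential expression in the boundary jets of $(g,k)$.
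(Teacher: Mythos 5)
The first thing to note is that the paper does not prove this theorem here: it is quoted from the first paper in the series, and the method behind it (recapitulated in Section~\ref{sec:Lagrange}) runs in the opposite direction from yours. Rather than constructing a DEC-preserving perturbation that pushes $(\bar E,\bar P)$ spacelike, the paper observes that the impossibility of such a perturbation --- which is what ``the positive mass theorem is true near $(g,k)$'' supplies --- makes $(g,\pi)$ a constrained local minimizer of a modified Regge--Teitelboim Hamiltonian $(n-1)\omega_{n-1}(2aE+b\cdot P)-\int_M\Phi^{\varphi}_{(g,\pi)}\cdot(\bar f,\bar X)$ with $(a,b)$ a null vector chosen so that $2aE+b\cdot P=2a(E-|P|)=0$. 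Lagrange multipliers then produce a lapse--shift pair asymptotic to that null translation and lying in the kernel of the adjoint linearized constraint operator, and the conclusion $E=|P|=0$ is extracted from the asymptotic expansion of this KID; that is where \eqref{eq:decay} and \eqref{eq:constraint_decay} actually do their work.

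Your proposal has a genuine gap, and it sits exactly where you flag ``the main obstacle.'' The perturbation you need --- one with $\delta E-\delta P_n<0$ whose induced $(\delta\mu,\delta J)$ is compatible with the DEC --- does not exist under the theorem's hypotheses: its nonexistence is the dual statement to the existence of the Lagrange multiplier above. You invoke Corvino--Schoen--Chru\'sciel--Delay surjectivity ``modulo the cokernel spanned by KIDs,'' but the entire content of the equality case is that this cokernel is nontrivial (there is an asymptotically null KID), so the caveat you mention is precisely the obstruction, not a technicality. Neither of your proposed rescues can work. There is no slack to exploit: under these hypotheses $E=|P|$ forces $\mu=|J|_g$ everywhere on $M$ (the paper states this explicitly as a consequence of the results it relies on), so the DEC is saturated globally and no region of strict inequality is available to absorb an indefinite-sign source. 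And a higher-order correction cannot undo a first-order obstruction of definite sign. Finally, the pp-wave examples cited in the paper show that \eqref{eq:decay} must enter your construction in an essential quantitative way, not merely to arrange harmonic asymptotics: those are complete DEC data on $\mathbb{R}^n$, $n>8$, with $E=|P|\neq 0$, near which the positive mass theorem holds (they are spin), so no nearby DEC data with $\bar E<|\bar P|$ exists for them; the only hypothesis of the theorem they violate is \eqref{eq:decay}. A construction that uses \eqref{eq:decay} only for a density reduction would apply to these examples as well and is therefore impossible.
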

Although the stronger asymptotic assumption \eqref{eq:decay} may seem undesirable in higher dimensions (note that \eqref{eq:decay} can be dropped for $n=3, 4$ if $\alpha$ in \eqref{eqn:AF} is close to $1$), we found pp-wave counterexamples showing that the theorem is false without it for $n>8$. See~\cite[Example 7]{Huang-Lee} for details. 

Once one knows that $E=0$, the ``second step'' described above follows from work of Eichmair~\cite{Eichmair:2013} if $n<8$. More specifically, following Schoen and Yau's proof for the $n=3$ case\footnote{When $n=3$, one must make an extra decay assumption on $\tr_g k$.}~\cite{Schoen-Yau:1981-pmt2}, Eichmair used the Jang equation  to prove the inequality $E\ge0$, and then by analyzing the case of equality in that proof, he showed that the $E=0$ case can only occur when the Jang solution provides a graphical isometric embedding of $(M, g)$ into Minkowski space with $k$ as its second fundamental form. 
The main result of this article is to replace this argument for the ``second step'' with one that naturally extends the methods used to prove Theorem~\ref{thm:E=P} and is self-contained in the sense that it does not depend on how one proves that $E\ge|P|$ (nor on how one proves $E\ge0$).

\begin{thm}\label{thm:main}
Let $(M, g, k)$ be a complete asymptotically flat initial data set with decay rate~$q$, possibly with boundary, and assume that the positive mass theorem is true near $(g,k)$. Further assume that $(g,k)$ is locally $C^5\times C^4$. If $(g,k)$ satisfies the dominant energy condition and the boundary (if there is one) is weakly outer trapped, then $E>0$ in each asymptotically flat end, unless $(M,g)$ has no boundary and isometrically embeds into  Minkowski space with $k$ as its second fundamental form, in which case $E=|P|=0$. 
\end{thm}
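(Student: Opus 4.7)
The plan is to bootstrap Theorem~\ref{thm:E=P} by first reducing to $E=|P|=0$ and then producing a full complement of translational Killing initial data (KID) pairs on $(M,g,k)$, enough to force an isometric embedding into Minkowski space. Since Theorem~\ref{thm:E=P} assumes the stronger decay~\eqref{eq:decay}--\eqref{eq:constraint_decay}, I would first arrange these via a standard density/approximation step (enabled by the local $C^5\times C^4$ regularity), so that from the outset one may assume $E=|P|=0$. What remains is to rule out a nonempty boundary and construct the Minkowski embedding.

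The core step is to produce $n+1$ translational KID pairs $(f_\alpha,X_\alpha)$, $\alpha=0,1,\dots,n$, solving $D\Phi^*_{(g,k)}(f_\alpha,X_\alpha)=0$ with prescribed asymptotics $(f_0,X_0)\to(1,0)$ and $(f_i,X_i)\to(0,\partial_i)$. The key observation is that at the tip of the forward light cone $(E,P)=(0,0)$, the inequality $\bar E\ge|\bar P|$ on nearby DEC data is much stronger than at a generic null point: linearizing along any admissible deformation $(g_t,k_t)$ and comparing the $t>0$ and $t<0$ branches shows that both $\dot E=0$ \emph{and} $\dot P=0$ must hold (rather than the single relation $\dot E=P\cdot\dot P/|P|$, which is what one would obtain when $P\neq 0$). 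This yields $n+1$ independent linear constraints on first variations of the constraint map, for all admissible deformations preserving DEC. Corvino-Schoen-type localized deformations realize a dense set of such first variations in the image of the linearized constraint operator, and extending the Lagrange-multiplier step of~\cite{Huang-Lee:2020}---which exploited only the single constraint $\dot E\ge 0$ to extract a timelike KID---turns these $n+1$ linear constraints into $n+1$ KIDs with the desired translational asymptotics.

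With the KIDs in hand, the remainder is largely classical. Pairing the KID system with $(\mu,J)$ as in Beig-Chru\'{s}ciel, combined with DEC and the positivity of $f_0$, forces $\mu\equiv 0$ and $J\equiv 0$, i.e.\ $(g,k)$ is vacuum. Moncrief's propagation theorem then lifts each KID to a Killing vector field on the maximal globally hyperbolic vacuum development of $(M,g,k)$, and the prescribed asymptotics guarantee that the resulting $n+1$ Killing vectors are linearly independent and span a translational subalgebra of the Poincar\'e algebra at infinity. A rigidity argument based on asymptotic flatness and the classification of vacuum spacetimes with an $(n+1)$-dimensional abelian translational symmetry then identifies the development with an open subset of Minkowski space, in which $M$ is realized as a spacelike hypersurface with second fundamental form $k$. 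The impossibility of a nonempty boundary is obtained by applying the strong maximum principle to $f_0$ at $\partial M$ using the weakly outer trapped condition, which is incompatible with the asymptotic $f_0\to 1$.

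The main obstacle I anticipate is the construction of the spatial KIDs in the second paragraph, specifically verifying that the Lagrange-multiplier argument of~\cite{Huang-Lee:2020} extends cleanly to the $(n+1)$-dimensional constraint system arising from the simultaneous vanishing of $\dot E$ and $\dot P$. In~\cite{Huang-Lee:2020} only a one-sided inequality was used; translating $n+1$ two-sided equalities into $n+1$ honest KIDs requires exhibiting a family of admissible test deformations---respecting DEC, the prescribed decay, and the boundary condition---whose first variations of $(E,P)$ span all of $\mathbb{R}^{n+1}$, and then checking that the dual Lagrange multipliers have the correct translational asymptotic profiles. The $C^5\times C^4$ regularity will be consumed here to justify the higher-order expansions of the constraint map along these deformations and to ensure the resulting KIDs are classical solutions with the regularity needed for the Moncrief propagation step.
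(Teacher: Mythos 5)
Your outline follows the same architecture as the paper (Lagrange multipliers $\Rightarrow$ $(n+1)$ translational KIDs $\Rightarrow$ vacuum $\Rightarrow$ Killing development $\Rightarrow$ flatness $\Rightarrow$ no boundary), but it has a genuine gap exactly at the step the paper identifies as the hard one: proving that $(g,k)$ is vacuum. First, a preliminary point: no density/approximation step to arrange \eqref{eq:decay}--\eqref{eq:constraint_decay} is needed (or available --- you cannot improve the decay of $(\mu,J)$ while preserving the DEC, the boundary condition, and the energy-momentum); since the positive mass theorem is true near $(g,k)$, the hypothesis $E=0$ already gives $|P|\le E=0$ directly, without Theorem~\ref{thm:E=P}. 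The real problem is your claim that ``pairing the KID system with $(\mu,J)$, combined with DEC and the positivity of $f_0$, forces $\mu\equiv0$ and $J\equiv0$.'' No such identity is available for solutions of the ordinary equation $(D\Phi|_{(g,\pi)})^*(f,X)=0$: the pp-wave examples of \cite{Huang-Lee} show that $E=|P|$ with the DEC does not imply vacuum, and the paper stresses that deducing vacuum is itself difficult. Two ingredients you omit are essential. (a) To apply the positive mass theorem to nearby data in the constraint set, those data must satisfy the DEC, and the level set of the \emph{ordinary} constraint map does not preserve the DEC; this is why the paper minimizes over the level set of the $\varphi$-modified operator $\Phi^\varphi_{(g,\pi)}$ from \eqref{eqn:modified}. (b) Vacuum is obtained not from the KID equation plus the DEC, but from the additional $J$-null-vector equation $2fJ+|J|_gX=0$ in \eqref{eqn:pair}, which holds only after choosing $\varphi$ generically (Theorem 4.1 of \cite{Huang-Lee}) and which, combined with the solution space having dimension greater than one, yields vacuum by a jet/unique-continuation argument (Corollary 6.6 of \cite{Huang-Lee}). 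This is also where the $C^5\times C^4$ regularity is actually consumed, not in ``higher-order expansions of the constraint map.'' Your Lagrange-multiplier mechanism for producing the $(n+1)$ KIDs is otherwise close to the paper's: one minimizes $2aE+b\cdot P$ for each $(a,b)$ with $2a\ge|b|$ and uses linearity of the solution space of \eqref{eqn:pair} to cover all $(a,b)$; the surjectivity of the linearized modified constraint map is supplied by \cite{Huang-Lee:2020} and \cite{LLU}, so no separate Corvino--Schoen construction is needed.

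The boundary step is also a gap. A strong maximum principle for $f_0$ cannot produce the contradiction: once the development is known to be locally flat, the covariant constancy of $\mathbf Y_0$ gives $-4f_0^2+|X_0|^2=-1$, hence $2f_0\ge1$ everywhere including $\partial M$, so there is no incompatibility between the boundary and $f_0\to1$ at infinity; moreover the weakly outer trapped condition $H+\tr_{\partial M}k\le0$ is not a boundary condition for any elliptic equation satisfied by $f_0$ in this stationary (non-static) setting. The paper instead proves a separate Lorentzian Frankel-type result (Theorem~\ref{thm:no-MOTS-stationary}, via Theorem~\ref{thm:no-MOTS-alt}): one connects $\{0\}\times\Sigma_{\mathrm{in}}$ to a large strictly outer untrapped coordinate sphere by a null geodesic realizing the first time $I^+(\{0\}\times\Sigma_{\mathrm{in}})$ meets the outer cylinder, and derives a contradiction from the Raychaudhuri equation and the strong comparison principle for $\theta^+$; this is needed precisely because at that stage the spacetime is only known to be locally Minkowski. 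Finally, the ``classification of vacuum spacetimes with an $(n+1)$-dimensional abelian translational symmetry'' that you invoke is not an off-the-shelf result: it is Theorem~\ref{thm:Killing0}, which requires proof (commutativity of the $\mathbf Y_\mu$ via the unique continuation result of \cite{Christodoulou-OMurchadha:1981}, followed by an open-closed argument) and, notably, needs no curvature or energy assumption at all.
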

Combining the two previous theorems gives  the following result. 
\begin{cor}\label{cor:main}
Let $(M, g, k)$ be a complete asymptotically flat initial data set with decay rate~$q$, possibly with boundary,  and assume that the positive mass theorem is true near $(g,k)$. Further assume that \eqref{eq:decay} and~\eqref{eq:constraint_decay} hold, and that $(g,k)$ is locally $C^5\times C^4$.  If $(g, k)$ satisfies the dominant energy condition and the boundary (if there is one) is weakly outer trapped, then $E>|P|$ in each asymptotically flat end, unless $(M, g)$ has no boundary and embeds into Minkowski space with $k$ as its second fundamental form, in which case $E=|P|=0$. 
\end{cor}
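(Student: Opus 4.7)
The plan is to deduce the corollary by simply chaining Theorem~\ref{thm:main} and Theorem~\ref{thm:E=P}. The two sets of hypotheses assumed in the corollary are precisely those required by these theorems: the stronger decay assumptions \eqref{eq:decay} and \eqref{eq:constraint_decay} are what Theorem~\ref{thm:E=P} demands, while the local $C^5\times C^4$ regularity is what Theorem~\ref{thm:main} demands, and the remaining hypotheses (complete asymptotic flatness, dominant energy condition, weakly outer trapped boundary, and ``the positive mass theorem is true near $(g,k)$'') are shared.

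First I would apply Theorem~\ref{thm:main}. This immediately produces a dichotomy. In the first alternative, $(M,g)$ has no boundary and embeds isometrically into Minkowski space with $k$ as its second fundamental form; then the ``unless'' clause of the corollary is satisfied, and $E=|P|=0$ as claimed, so we are done. In the second alternative, $E>0$ holds in every asymptotically flat end, and the remaining task is to upgrade this to $E>|P|$.

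To close the argument in the second alternative, I would argue by contradiction: suppose some end has $E=|P|$. Since the positive mass theorem is true near $(g,k)$ and the stronger decay hypotheses \eqref{eq:decay}--\eqref{eq:constraint_decay} hold, all the hypotheses of Theorem~\ref{thm:E=P} are in force, and it forces $E=|P|=0$ in that end. This contradicts the conclusion $E>0$ obtained from Theorem~\ref{thm:main}. Hence $E>|P|$ in every end, completing the proof.

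There is no genuine obstacle here: all the substantive work lies in Theorems~\ref{thm:E=P} and~\ref{thm:main} themselves. The only things to verify are bookkeeping: that Theorem~\ref{thm:main} already delivers the ``no boundary'' conclusion together with the Minkowski embedding, so the corollary's precise wording needs no separate argument; and that the hypotheses of both theorems are indeed jointly implied by the hypotheses of the corollary, which they are by direct inspection.
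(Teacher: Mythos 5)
Your proposal is correct and matches the paper, which gives no separate argument beyond the remark that the corollary follows by combining Theorem~\ref{thm:E=P} and Theorem~\ref{thm:main}. The only implicit step worth noting is that ruling out $E=|P|$ yields $E>|P|$ because the assumption that the positive mass theorem is true near $(g,k)$ already gives $E\ge|P|$ for $(g,k)$ itself.
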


%tells us that if $(M,g,k)$ is a complete asymptotically flat initial data set without boundary, in the stronger sense of Theorem~\ref{thm:E=P}, and satisfies the dominant energy condition, then the ADM energy momentum of $(M, g, k)$ is future timelike ($E>|P|$) unless it comes from Minkowski space. Moreover, if we allow for a nonempty weakly outer trapped boundary, then we directly have $E>|P|$. 
This fact was essentially already proved in dimension 3 by Beig and Chru\'{s}ciel~\cite{Beig-Chrusciel:1996} and for general spin manifolds by Chru\'{s}ciel and D. Maerten~\cite{Chrusciel-Maerten:2006}. These results required a slightly stronger assumption than~\eqref{eq:decay} but do not require $C^5\times C^4$ regularity. More recently, Sven Hirsch and Yiyue Zhang~\cite{Hirsch-Zhang} gave a new proof in dimension $3$ that avoids assuming any of~\eqref{eq:decay}, \eqref{eq:constraint_decay}, or $C^5\times C^4$ regularity.
%though it should be noted that when $n=3$, the stronger assumption only asks for slightly more regularity of the decay and not a faster decay rate $q$.

The basic outline of the proof of Theorem~\ref{thm:main} is the following: As in the proof of Theorem~\ref{thm:E=P}, we use the fact that $(g, k)$ minimizes a modified Regge-Teitelboim Hamiltonian subject to a constraint, and then we invoke Lagrange multipliers to construct lapse-shift pairs that satisfy a Killing initial data type of equation. The difference is that when $E=0$, $(g,k)$ minimizes many different 
Regge-Teitelboim Hamiltonians, so we can actually construct an entire $(n+1)$-dimensional space of these lapse-shift pairs that can be thought of as being asymptotic to the $(n+1)$-dimensional space of translational Killing fields on Minkowski space as we approach spatial infinity. 
 The essential new ingredient is that by invoking our recent work~\cite{Huang-Lee}, 
we can also make sure that these lapse-shift pairs satisfy what we call the ``$J$-null-vector equation.''
 Moreover, having so many solutions implies that $(g,k)$ is vacuum. All of this is explained in Section~\ref{sec:Lagrange}, culminating in the statement of Lemma~\ref{lem:KID}. Observe that proving that $(g,k)$ is vacuum is itself a difficult task.   Note that the vacuum condition $\mu=|J|_g=0$ is a much stronger condition than the ``borderline'' $\mu=|J|_g$ case of the dominant energy condition. The fact that $E=|P|$ implies $\mu=|J|_g$ is a previously unstated direct consequence of~\cite[Theorem 5.2]{Huang-Lee:2020} and~\cite[Theorem 6]{Huang-Lee}, but the aforementioned pp-wave counterexamples from~\cite[Example 7]{Huang-Lee} demonstrate that $E=|P|$ does not imply vacuum.

 Once we know that $(g,k)$ is vacuum, it follows that each of these lapse-shift pairs is actually vacuum Killing initial data for $(g,k)$, and we can extend them to become actual Killing fields on the vacuum Killing development of $(g,k)$. The next step is to show that on an asymptotically flat Lorentzian manifold, having such a space of Killing fields that are asymptotic to the $(n+1)$ translational directions implies that the Lorentzian manifold must be Minkowski space. This observation is described in Theorem~\ref{thm:Killing0} below, which requires no curvature assumptions.

\begin{thm}\label{thm:Killing0} 
Let $(\mathbf N, \mathbf g)$ be an $(n+1)$-dimensional connected $C^2$ Lorentzian spacetime, possibly with boundary, that is asymptotically flat in the sense that a subset of $\mathbf N$ is diffeomorphic to $\rr\times (\rr^n\setminus \bar B_1(0))$, and in these $x^0,\ldots,x^n$ coordinates, 
\[
	\mathbf g_{\mu\nu} (x^0, x)= \eta_{\mu\nu} + O_2(|x|^{-\beta})
\]
for some $\beta>0$, where $\eta_{\mu\nu}$ is the standard Minkwoski metric and $|x|$ denotes the norm of the spatial coordinates $x:=(x^1,\ldots,x^n)$ only. More specifically, the $O_2$ notation means that there exists a positive number $C$ such that 
\[
\sum_{|I|=0}^2 \big| |x|^{\beta+|I|} \partial^I (\mathbf g_{\mu\nu} - \eta_{\mu\nu})(x^0, x)\big|  \le C,
\]
where $\partial^I $ denotes the partial derivatives with respect to all $(n+1)$ coordinates.

 Assume that $(\mathbf N, \mathbf g)$ is equipped with Killing fields $\mathbf Y_0,\ldots, \mathbf Y_n$ that are asymptotic to the coordinate translation directions in the sense that 
 \[ \mathbf{Y}_\mu=\frac{\partial}{\partial x^\mu} + O_2(|x|^{-\beta}). \]
Then $\mathbf Y_0,\ldots, \mathbf Y_n$ form a covariant constant global Lorentzian orthonormal frame, and in particular, $\mathbf g$ must be flat.
\end{thm}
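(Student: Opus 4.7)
The strategy is first to show that $[\mathbf{Y}_\mu, \mathbf{Y}_\nu] = 0$ for all $\mu, \nu$, and then to deduce from this that the frame is parallel, orthonormal, and global. Writing $\mathbf{Y}_\mu = \frac{\partial}{\partial x^\mu} + A_\mu$ with $A_\mu = O_2(|x|^{-\beta})$, a direct computation in the asymptotic coordinates gives
\[
[\mathbf{Y}_\mu, \mathbf{Y}_\nu]^\gamma = \partial_\mu A_\nu^\gamma - \partial_\nu A_\mu^\gamma + A_\mu^\alpha \partial_\alpha A_\nu^\gamma - A_\nu^\alpha \partial_\alpha A_\mu^\gamma = O_1(|x|^{-\beta-1}),
\]
so $Z := [\mathbf{Y}_\mu, \mathbf{Y}_\nu]$ is a Killing field with $Z$ and $\nabla Z$ both tending to $0$ at spatial infinity. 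The crux of the proof is then the following rigidity statement: any Killing field $Z$ on $(\mathbf{N}, \mathbf{g})$ with $(Z, \nabla Z) \to 0$ at spatial infinity vanishes identically. To prove this, fix $p_0$ in the asymptotic end and let $\gamma(t)$ be a coordinate straight line from $p_0$ going to $|x|=\infty$. Along $\gamma$, the pair $u(t) := (Z, \nabla Z)|_{\gamma(t)}$ satisfies a first-order linear ODE $u' = \mathcal{A}(t)\, u$ obtained from the standard Killing identity expressing $\nabla\nabla Z$ algebraically in $Z$ and the Riemann tensor, together with a correction from $\nabla_T T$ since $\gamma$ is not a $\mathbf{g}$-geodesic. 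The asymptotic flatness hypothesis makes $\mathcal{A}(t)$ integrable in $t$ along $\gamma$, so Gronwall's inequality yields $|u(p_0)| \le C\, \limsup_{t\to\infty} |u(\gamma(t))| = 0$. Thus $(Z, \nabla Z)(p_0) = 0$, and since Killing fields on a connected manifold are determined by their $1$-jet at a single point, $Z \equiv 0$ on $\mathbf{N}$.

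Given $[\mathbf{Y}_\mu, \mathbf{Y}_\nu] \equiv 0$, introduce $B_{\mu\alpha\beta} := \mathbf{g}(\nabla_{\mathbf{Y}_\mu} \mathbf{Y}_\alpha, \mathbf{Y}_\beta)$. Torsion-freeness combined with $[\mathbf{Y}_\mu, \mathbf{Y}_\alpha] = 0$ gives $B_{\mu\alpha\beta} = B_{\alpha\mu\beta}$, while the Killing equation for $\mathbf{Y}_\alpha$ gives $B_{\mu\alpha\beta} = -B_{\beta\alpha\mu}$. Alternating these two symmetries forces $B \equiv 0$, so $\nabla \mathbf{Y}_\mu = 0$ at every point where the frame $\{\mathbf{Y}_\beta\}$ spans $T\mathbf{N}$. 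The asymptotic estimate shows this spanning condition holds on an open neighborhood $U$ of spatial infinity. If $U \ne \mathbf{N}$, then by connectedness there is a point $p^* \in \overline{U} \cap (\mathbf{N} \setminus U)$ at which some nontrivial linear combination $W = \sum c^\mu \mathbf{Y}_\mu$ vanishes; continuity of $\nabla W$ from $U$ gives $\nabla W(p^*) = 0$, so unique continuation forces $W \equiv 0$ on the connected manifold $\mathbf{N}$, contradicting the nonzero asymptotic limit $\sum c^\mu \frac{\partial}{\partial x^\mu}$. Hence the frame spans $T\mathbf{N}$ globally and $\nabla \mathbf{Y}_\mu \equiv 0$ everywhere.

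Parallelism of the frame makes $\mathbf{g}(\mathbf{Y}_\mu, \mathbf{Y}_\nu)$ constant on $\mathbf{N}$, so it equals its asymptotic value $\eta_{\mu\nu}$; in local coordinates dual to $\{\mathbf{Y}_\mu\}$ the metric components are then constants, so $\mathbf{g}$ is flat, completing the proof. The most delicate step is the rigidity statement for decaying Killing fields: the subtlety is that mere pointwise decay of $(Z, \nabla Z)$ at infinity is not by itself enough to conclude vanishing at a fixed interior point; the argument really depends on the integrability of the Riemann curvature and Christoffel coefficients along outgoing curves, which is precisely what the $\beta > 0$ decay rate provides.
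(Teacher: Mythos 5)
Your proof is correct, but both of its main steps take a genuinely different route from the paper's. For the vanishing of the brackets $[\mathbf Y_\mu,\mathbf Y_\nu]$, the paper restricts the bracket to a constant-$x^0$ slice, decomposes it into a lapse-shift pair of size $O_1(|x|^{-\beta-1})$, and invokes Lemma~\ref{le:lapse-shift} together with the Christodoulou--\'O~Murchadha theorem on decaying solutions of homogeneous Hessian-type systems; you instead run the spacetime Killing transport system (the identity expressing $\nabla\nabla Z$ algebraically in $Z$ and the Riemann tensor) along an outgoing coordinate ray and apply Gronwall, using that the Christoffel symbols and curvature decay like $|x|^{-\beta-1}$ and $|x|^{-\beta-2}$ and are therefore integrable along the ray. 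Both are legitimate; yours is more self-contained, while the paper's reuses machinery it needs elsewhere. For the second half, the paper runs an open-closed argument on the set $\mathbf V$ where the sectional curvature vanishes, the frame is orthonormal, and $\nabla\mathbf Y_\mu=0$, relying on the explicit classification of constant-length Killing fields of (locally) Minkowski space; you instead observe that $B_{\mu\alpha\beta}=\mathbf g(\nabla_{\mathbf Y_\mu}\mathbf Y_\alpha,\mathbf Y_\beta)$ is symmetric in its first two slots (torsion-freeness plus commutativity) and antisymmetric in its first and third (Killing equation), hence identically zero, so the frame is parallel wherever it spans, and you then propagate the spanning condition globally by a $1$-jet unique continuation argument at a point of $\overline U\setminus U$. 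This avoids any appeal to the structure of the Minkowski isometry algebra and obtains flatness only at the end, as a consequence of having a global parallel frame. The one point worth tightening is that $U$ should be defined as the full open set of points where $\{\mathbf Y_0,\dots,\mathbf Y_n\}$ spans $T\mathbf N$ (not merely ``an open neighborhood of spatial infinity''), so that connectedness genuinely produces a point of $\overline U\setminus U$ at which spanning fails; with that reading, your contradiction argument, and hence the whole proof, is complete.
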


As a direct consequence, the above theorem also implies a version for asymptotically flat \emph{Riemannian} manifolds, which is included in Corollary~\ref{cor:KillingRiemannian} below. 

Note that we do not assume $(\mathbf N, \mathbf g)$ is complete in Theorem~\ref{thm:Killing0}. If $(\mathbf N, \mathbf g)$ is complete without boundary, then it must be Minkowski space by the Killing-Hopf theorem. To rule out the possibility of having a weakly outer trapped boundary, we prove the following general result.

\begin{thm}\label{thm:no-MOTS-stationary}
Let $\Omega$ be a compact connected manifold with boundary such that $\partial \Omega=\Sigma_{\mathrm{in}}\sqcup \Sigma_{\mathrm{out}}$ where $\Sigma_{\mathrm{in}}$ and $\Sigma_{\mathrm{out}}$ are both closed and nonempty. Let $\mathbf{N} = [0,\infty)\times \Omega$, and let $\mathbf g$ be a $C^2$  Lorentzian metric on $\mathbf{N}$ such that $\{ 0\}\times \Omega$ is spacelike. Suppose the following holds:
\begin{enumerate}
\item $\frac{\partial}{\partial t}$ is a global timelike Killing vector field, where $t$ denotes the coordinate for the $[0,\infty)$ factor. Thus $(\bf N, \bf g)$ is strictly stationary.
\item $\mathbf g$ satisfies the null energy condition.
\item  $\{0\}\times \Sigma_{\mathrm{out}}$ is a strictly outer untrapped surface (that is, $\theta^+>0$) with respect to the ``outward'' future null normal pointing out of $\mathbf{N}$.
\end{enumerate}
Then $\{0\}\times \Sigma_{\mathrm{in}}$ cannot be a weakly outer trapped surface (that is, it cannot have $\theta^+\le0$ on all of $\{0\}\times \Sigma_{\mathrm{in}}$) with respect to the ``outward'' future null normal pointing toward the interior of $\mathbf N$.
\end{thm}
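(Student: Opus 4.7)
I argue by contradiction, combining Raychaudhuri's equation under the null energy condition with the Killing invariance provided by $\partial_t$ and the outer untrapped barrier at $\Sigma_{\mathrm{out}}$. Suppose $\Sigma:=\{0\}\times \Sigma_{\mathrm{in}}$ has $\theta^+\leq 0$ with respect to the outward future null normal $\ell^+$ pointing into $\mathbf N$.

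First, I would fire future null geodesics $\gamma_p(\lambda)$ from each $p\in\Sigma$ in direction $\ell^+(p)$ to generate the null hypersurface $\mathcal N\subset\mathbf N$. The Raychaudhuri equation
\[
\frac{d\theta}{d\lambda} = -\tfrac{\theta^2}{n-1} - |\sigma|^2 - \mathrm{Ric}(\gamma',\gamma')
\]
together with NEC gives $\theta\leq \theta^+(p)\leq 0$ propagating along the generator $\gamma_p$ as long as it remains in $\mathbf N$. At any $p$ with $\theta^+(p)<0$, the generator focuses in finite affine parameter, and the standard second-variation argument then produces a timelike curve from $\Sigma$ to a point on $\mathcal N$ beyond the focal point, contradicting that $\mathcal N\subset\partial J^+(\Sigma)$ is achronal. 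In the remaining case $\theta^+\equiv 0$ on $\Sigma$, Raychaudhuri forces $\sigma\equiv 0$ and $\mathrm{Ric}(\gamma',\gamma')\equiv 0$ along $\mathcal N$, so that $\mathcal N$ is non-expanding and shear-free.

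Next, I would use the outer untrapped barrier to preclude generators of $\mathcal N$ from exiting $\mathbf N$ through $[0,\infty)\times\Sigma_{\mathrm{out}}$. If a generator $\gamma_p$ exits at some $q\in[0,\infty)\times\Sigma_{\mathrm{out}}$, then at $q$ its null tangent points out of $\mathbf N$, so up to scaling it coincides with the outward future null normal along which $\theta^+(\Sigma_{\mathrm{out}})>0$ is defined; by Killing-invariance, $\theta^+(\{t\}\times\Sigma_{\mathrm{out}})>0$ for all $t\geq 0$. But the transverse cross-section of $\mathcal N$ at $q$ has null expansion $\leq 0$ from the Raychaudhuri propagation above, a contradiction. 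Hence generators cannot exit through the outer lateral boundary.

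Third, to close the argument in the MOTS case $\theta^+\equiv 0$, I would exploit stationarity. The energy $E_p:=-\mathbf g(\gamma_p',\partial_t)$ is a positive constant along $\gamma_p$ (positive because $\gamma_p'$ is future null and $\partial_t$ future timelike; conserved because $\partial_t$ is Killing), and combined with uniform bounds on $|\mathbf g(\partial_t,\partial_t)|$ coming from strict timelike-ness plus the compactness of $\Omega$, this yields a uniform positive lower bound on $dt/d\lambda$ along each generator, so non-exiting generators reach arbitrarily large $t$. A Killing-orbit recurrence argument on the compact orbit space $\Omega$ then extracts a bi-infinite null line inside the non-expanding hypersurface $\mathcal N$, and a Galloway-type null splitting rigidity statement forces $\partial_t$ to become null somewhere on $\mathcal N$, contradicting strict timelike-ness of $\partial_t$.

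\textbf{The main obstacle} is the third step: executing the null-line extraction and the rigidity conclusion in our non-complete spacetime. In particular, the barrier in Step~2 rules out exit through $\Sigma_{\mathrm{out}}$ but not through the inner lateral boundary $[0,\infty)\times\Sigma_{\mathrm{in}}$, where no sign information on the opposite null direction is available. I expect to handle this by working in the compact Killing quotient $\mathbf N/\langle\partial_t\rangle\cong\Omega$ and adapting the null splitting technology to that setting, rather than relying on global timelike geodesic completeness of $\mathbf N$.
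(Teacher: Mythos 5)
Your proposal has genuine gaps, and the third step, which you yourself flag as the main obstacle, is exactly where the argument breaks. Two specific problems. First, your focusing argument in Step 1 needs the generators of $\mathcal N$ to survive long enough inside $\mathbf N$ to reach their focal points (affine parameter of order $(n-1)/|\theta^+(p)|$, which is unbounded as $\theta^+(p)\to 0^-$); since $\mathbf N=[0,\infty)\times\Omega$ is incomplete with lateral boundary, generators can simply exit, and your barrier in Step 2 does not prevent exit through $[0,\infty)\times\Sigma_{\mathrm{in}}$. Moreover, the barrier argument itself is flawed: a null generator crossing the timelike hypersurface $[0,\infty)\times\Sigma_{\mathrm{out}}$ transversally need not be tangent (even up to scale) to the outward future null \emph{normal} of the codimension-two surface $\{t\}\times\Sigma_{\mathrm{out}}$ — the null directions pointing out of a timelike hypersurface form an open set, while the null normals of a cross-section are just two — and the expansion comparison you invoke at $q$ requires the two cross-sections to be tangent there, which you have not arranged. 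Second, in the borderline case $\theta^+\equiv 0$ you appeal to a null-line extraction plus Galloway's null splitting theorem, which requires null completeness and is not available here; "adapting the null splitting technology" to the compact quotient is not carried out and is not a routine adaptation.

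The paper's proof avoids all of this by running the construction \emph{backwards from the outer barrier} rather than forwards from $\Sigma_{\mathrm{in}}$. Using the uniform timelikeness of $\partial_t$ and compactness of $\Omega$, one first shows $I^+(\{0\}\times\Sigma_{\mathrm{in}})$ must reach $(0,\infty)\times\Sigma_{\mathrm{out}}$ (steep graphs $(cs,\sigma(s))$ are timelike for large $c$). Taking $T$ to be the \emph{infimum} of times at which this happens, a limit-curve argument produces a null geodesic $\gamma\subset\partial I^+(\{0\}\times\Sigma_{\mathrm{in}})$ from $\Sigma_{\mathrm{in}}$ to $(T,p)\in\{T\}\times\Sigma_{\mathrm{out}}$; the time function and the uniform bounds show $\gamma$ has finite background length, hence is not inextendible, and the minimality of $T$ forces $\gamma$ to be normal to both surfaces with no focal points. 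One then takes the $t=T-\epsilon$ cross-sections $\Sigma_1$ (of the null hypersurface emanating from $\Sigma_{\mathrm{in}}$) and $\Sigma_2$ (emanating backwards from $\{T\}\times\Sigma_{\mathrm{out}}$), which are tangent at a point with $\Sigma_2$ enclosing $\Sigma_1$; Raychaudhuri plus the null energy condition gives $\theta^+_{\Sigma_1}\le 0$ while $\theta^+_{\Sigma_2}>0$, and the \emph{strong comparison principle} for $\theta^+$ yields the contradiction. This handles $\theta^+\equiv 0$ on $\Sigma_{\mathrm{in}}$ with no rigidity or completeness input, which is precisely the case your outline cannot close. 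If you want to salvage your approach, replace Steps 2--3 by this "first touching time plus strong maximum principle" mechanism.
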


 When the spatial dimension is less than $8$, Theorem~\ref{thm:no-MOTS-stationary} follows from an elegant argument of A.~Carrasco and M.~Mars (used to prove~\cite[Corollary~1]{Carrasco-Mars:2008}). The dimension restriction is needed because their proof relies on the existence theory of smooth MOTS by Eichmair~\cite{Eichmair:2009}. Our proof is valid in all dimensions and also applies to  more general situations (e.g. the stationary assumption can be significantly relaxed). See Theorem~\ref{thm:no-MOTS-alt}. 

For the special case of static spacetime metrics, which are of the form $\mathbf g = -f^2 dt^2 + g$ where  $f>0$, the null energy condition becomes the assumption 
\begin{align}\label{eq:null}
	f\mathrm{Ric}_{g} - \nabla^2 f + (\Delta_g f) g \ge 0.
\end{align}

\begin{cor}
Let $(\Omega, g)$ be a compact connected $C^2$ Riemannian manifold with boundary such that $\partial \Omega= \Sigma_{\mathrm{in}}\sqcup \Sigma_{\mathrm{out}}$ where $\Sigma_{\mathrm{in}}$ and $\Sigma_{\mathrm{out}}$ are both closed and nonempty. Suppose there is a $C^2$ function  $f>0$ on $\Omega$ such that~\eqref{eq:null} holds.

%(That is, $g$ has zero scalar curvature, and there exists $f>0$ on $\Omega$ such that  $\nabla^2 f = f\mathrm{Ric}_g$.) 
Suppose $\Sigma_{\mathrm{out}}$ has mean curvature $H>0$ with respect to the unit normal pointing out of~$\Omega$. Then $\Sigma_{\mathrm{in}}$ cannot have $H\le 0$ everywhere, with respect to the unit normal pointing into $\Omega$.
\end{cor}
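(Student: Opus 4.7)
The plan is to deduce the corollary by applying Theorem~\ref{thm:no-MOTS-stationary} to the standard static Lorentzian spacetime associated to $(\Omega,g,f)$. Specifically, I would set $\mathbf{N} := [0,\infty)\times \Omega$ and equip it with the static metric $\mathbf g := -f^2\,dt^2 + g$. Since $f>0$ is $t$-independent, $\partial/\partial t$ is a nowhere-vanishing timelike Killing field, so $(\mathbf N,\mathbf g)$ is strictly stationary, and $\{0\}\times \Omega$ is spacelike. This gives hypothesis~(1) of Theorem~\ref{thm:no-MOTS-stationary} for free.

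Next I would verify that \eqref{eq:null} is exactly the null energy condition for $\mathbf g$. A standard warped-product Ricci computation gives $\mathbf{Ric}(\partial_t,\partial_t)=f\,\Delta_g f$, $\mathbf{Ric}(\partial_t,X)=0$, and $\mathbf{Ric}(X,Y)=\mathrm{Ric}_g(X,Y)-f^{-1}\nabla^2 f(X,Y)$ for $X,Y$ tangent to $\Omega$. Any null vector at a point of $\mathbf N$ is a positive multiple of $L=f^{-1}\partial_t + e$ for some $g$-unit vector $e\in T\Omega$, and a short calculation yields
\[
\mathbf{Ric}(L,L)=f^{-1}\bigl(f\,\mathrm{Ric}_g(e,e)-\nabla^2 f(e,e)+(\Delta_g f)\,g(e,e)\bigr).
\]
Thus $\mathbf{Ric}(L,L)\ge 0$ for all null $L$ if and only if \eqref{eq:null} holds, yielding hypothesis~(2).

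For hypothesis~(3) and the conclusion, I would translate null expansions of slices $\{0\}\times \Sigma$ into Riemannian mean curvatures. Because $\partial/\partial t$ is a Killing field orthogonal to $\{0\}\times \Omega$, that slice is totally geodesic in $(\mathbf N,\mathbf g)$, so its second fundamental form vanishes and $\tr_\Sigma k=0$ for any codimension-one $\Sigma\subset \Omega$. Consequently, with the ``outward'' future null normal whose spatial part points out of $\mathbf N$ (resp.\ into $\mathbf N$), the null expansion satisfies $\theta^+=H$ where $H$ is the Riemannian mean curvature in $\Omega$ with respect to the unit normal pointing out of (resp.\ into) $\Omega$. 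Under the hypothesis of the corollary, this identifies $\{0\}\times \Sigma_{\mathrm{out}}$ as strictly outer untrapped, giving hypothesis~(3).

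If $\Sigma_{\mathrm{in}}$ had $H\le 0$ everywhere with respect to the unit normal pointing into $\Omega$, then $\{0\}\times \Sigma_{\mathrm{in}}$ would be a weakly outer trapped surface in the sense of Theorem~\ref{thm:no-MOTS-stationary}, contradicting its conclusion. So the corollary follows. The only step with any content is the null-energy-condition computation; beyond that, the proof is just an assembly of Theorem~\ref{thm:no-MOTS-stationary} in the static setting, so I do not anticipate any genuine obstacle.
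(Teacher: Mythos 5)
Your proposal is correct and takes essentially the same route as the paper: the corollary is obtained there by applying Theorem~\ref{thm:no-MOTS-stationary} to the static spacetime $\mathbf g=-f^2\,dt^2+g$, observing exactly as you do that the null energy condition reduces to~\eqref{eq:null} and that the totally geodesic $t=0$ slice makes $\theta^+$ coincide with the Riemannian mean curvature $H$. Your write-up simply supplies the standard static Ricci computation that the paper leaves implicit.
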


Specializing even more to the case where $f=1$ reduces to a well-known classical theorem for manifolds with nonnegative Ricci curvature, stated as Theorem~\ref{thm:Frankel}.  Note that many examples, such as static vacuum metrics and electro-vacuum metrics, with any cosmological constant, satisfy condition~\eqref{eq:null}.

%is conceptually simple and quite elegant, we will summarize it here: By~\cite{Eichmair:2009}, if $n<8$, we know that there must exist an outermost MOTS $\Sigma$ in $M$ enclosing $\partial M$, where \emph{outermost} here means that $\Sigma$ cannot be enclosed by any other weakly outer trapped surface. Then for any $\epsilon>0$, $\{-\epsilon\}\times \Sigma$ is a MOTS in $\{-\epsilon\}\times M$ since $\frac{\partial}{\partial x^0}$ is a Killing vector on $\mathbf N=\rr\times M$. Then we consider the null hypersurface $\mathcal{N}_{\epsilon}$ in $\mathbf N$ starting from $\{-\epsilon\}\times \Sigma$ and generated by the outward future null directions, and define $\Sigma_{\epsilon}= \mathcal{N}_{\epsilon} \cap (\{0\}\times M)$. Clearly, $\Sigma_{\epsilon}$ must enclose $\{0\}\times \Sigma$ in $\{0\}\times M$, and for small enough $\epsilon$, $\Sigma_{\epsilon}$ will be smooth. Then by the Raychaudhuri equation applied to $\mathcal{N}_{\epsilon}$, it is easy to see that $\Sigma_{\epsilon}$ must be weakly outer trapped, contradicting the outermost assumption. 
The paper is organized as follows. In Section~\ref{sec:Lagrange}, we explore the Lagrange multiplier method carried out in \cite{Huang-Lee:2020} for the $E=0$ case using the new results from \cite{Huang-Lee} to construct an $(n + 1)$-dimensional space of asymptotically translational vacuum Killing initial data. In Section~\ref{sec:Killing}, we prove Theorem~\ref{thm:Killing0} and then complete the proof of the main result, Theorem~\ref{thm:main}, for the case of no boundary.  Finally, the possibility of having a nonempty weakly outer trapped boundary is ruled out by Theorem~\ref{thm:no-MOTS-stationary}, whose proof is presented in Section~\ref{sec:boundary} and can be read independently from all prior sections.

\section{Lagrange multipliers}\label{sec:Lagrange}

We follow closely the definitions and notations in \cite{Huang-Lee:2020}. In particular, we refer the reader to \cite[Section 2]{Huang-Lee:2020} for the definitions of the weighted H\"older space $C^{k,\alpha}_{-q}(M)$ and the weighted Sobolev space $W^{k,p}_{-q}(M)$. Assume $p>n$, and let  $\mathscr{M}^{2,p}_{-q}$ denote the set of symmetric $(0,2)$-tensors $\gamma$ such that  $\gamma-g_{\mathbb{E}}\in W^{2,p}_{-q}(M)$ and $\gamma$ is positive definite at each point. By the inclusion $C^{k,\alpha}_{-q}\subset W^{k,p}_{-q'}$, for any $q'<q$, whenever we work on the weighted Sobolev spaces, we shall assume the decay rate $q$ in the weighted Sobolev spaces  to be slightly smaller than the assumed asymptotic decay rate appearing in Definition~\ref{def:AF}.

Initial data $(g,k)$ can  be equivalently described by a pair $(g,\pi)$ where $\pi$ is a symmetric $(2,0)$-tensor called the \emph{conjugate momentum tensor}, which is related to the 
$(0,2)$-tensor $k$ via the equation
\begin{align}\label{eqn:k}
 k_{ij}:=g_{i\ell}g_{jm} \pi^{\ell m} -\tfrac{1}{n-1}(\tr_g \pi)g_{ij}.
 \end{align}
By slight abuse of vocabulary, we will refer to $(g, \pi)$ as initial data. 
We define the \emph{constraint map} $\Phi$ on initial data by
\[ \Phi(g, \pi) = (2\mu, J),
\]
 where $\mu$ and $J$ are functions of $(g,\pi)$, defined by~\eqref{eqn:constraints} combined with~\eqref{eqn:k}. Recall that the \emph{dominant energy condition (or DEC)} is the condition $\mu\ge|J|_g$.

As in~\cite{Huang-Lee}, given an initial data set $(M, g, \pi)$ and a
smooth function $\varphi$ on $M$, we introduce the \emph{$\varphi$-modified constraint operator}, which is 
defined on initial data $(\gamma,\tau)$ on $M$ by
\begin{align}
		\Phi^{\varphi}_{(g,\pi)}  (\gamma, \tau) 
		&:=
		\Phi(\gamma, \tau) + \left(2\varphi |J|_\gamma^2, (\tfrac{1}{2}+\varphi |J|_g) \gamma \cdot J\right),	\label{eqn:modified}	
\end{align}
where $J$ denotes the current density of $(g,\pi)$, and $(\gamma\cdot J)^i := g^{ij} \gamma_{jk} J^k$ denotes the contraction of $\gamma$ and $J$ with respect to $g$. The crucial property of $\Phi^{\varphi}_{(g,\pi)}$ is that for any $(\gamma, \tau)$, if $(g, \pi)$ satisfies the DEC, $|\gamma-g|_g<1$, and 
\[\Phi^{\varphi}_{(g,\pi)} (\gamma, \tau) =\Phi^{\varphi}_{(g,\pi)}(g,\pi),
\]
then $(\gamma, \tau)$ also satisfies the DEC (see Lemma 3.4 of~\cite{Huang-Lee}). Note that this is a property that the ordinary constraint operator $\Phi$ does not have, but  since $\Phi^{\varphi}_{(g,\pi)}$ only differs from $\Phi$ by an affine function of $(\gamma, \tau)$, it behaves similarly from a PDE perspective.

Throughout this article, a \emph{lapse-shift pair} $(f, X)$ on $M$ will simply refer to a function $f$ and a vector field $X$ on $M$.
Assuming that $(g,\pi)$ is locally $C^5\times C^4$, Theorem 4.1 of~\cite{Huang-Lee}, which was the main technical result of~\cite{Huang-Lee}, tells us\footnote{Take $Z=J$ in the statement of that theorem.} that there exists a dense subset $\mathcal D \subset C^3(\Int M)$ such that if $\varphi \in \mathcal D$, then any lapse-shift pair $(f, X)$ solving  
\begin{equation}\label{eqn:single}
\left(\left.D\Phi_{(g, \pi)}^{\varphi}\right|_{(g,\pi)}\right)^*(f, X)=0
\end{equation}
in the interior of $M$ must also satisfy the pair of equations
\begin{align}\label{eqn:pair}
\begin{split}
\left(\left.D\Phi_{(g, \pi)}^{0}\right|_{(g,\pi)}\right)^*(f, X)&=0\\
2fJ+|J|_g X&=0,
\end{split}
\end{align}
where the second equation is what we call the \emph{$J$-null-vector equation}.
%Moreover, it is clear from the proof of~\cite[Theorem 4.1]{Huang-Lee} that $\varphi$ can be chosen to decay as quickly as we want.\footnote{More precisely, the property that $\varphi$ needs to have is the following: There is a sequence of points $q_i$ on $\Int M$ and polynomials $s_{q_i}$ on the $3$-jet space at $q_i$ such that the $3$-jet of $\varphi$ at $q_i$ needs to miss the zero set of~$s_{q_i}$.} 
Choosing any bounded $\varphi\in \mathcal D$ ensures that the terms in \eqref{eqn:modified} involving $\varphi$ have the appropriate fall-off rates so that
\[
\Phi^{\varphi}_{(g,\pi)}:  \mathscr{M}^{2,p}_{-q}\times W^{1,p}_{-1-q}\too L^p_{-2-q}.
\]

We will use the method of~\cite{Huang-Lee:2020} to construct solutions to~\eqref{eqn:single} and hence also~\eqref{eqn:pair}. The following definition simply generalizes Definition 5.1 of~\cite{Huang-Lee:2020} from the case $\varphi=0$ to allow for general $\varphi$.
 \begin{defin}\label{def:functional}
 Let $(M, g, \pi)$ be a complete asymptotically flat manifold, possibly with boundary, and choose $q$ to be slightly smaller than the asymptotic decay rate of $(g,\pi)$.  Fix an end of $M$ and constants $a\in \mathbb{R}$ and $b\in \mathbb{R}^n$. Let $(\bar f, \bar X)$ be a smooth lapse-shift pair on $M$ such that $(\bar f, \bar X)$ is equal to $\left(a,\sum b^i \frac{\partial}{\partial x^i}\right)$ in the specified end and vanishes on all other ends of $M$ (if any) and in a neighborhood of $\partial M$ (if nonempty). Let $\varphi$ be a smooth bounded function on~$M$.

We define the \emph{$\varphi$-modified Regge-Teitelboim Hamiltonian} corresponding to $(g,\pi)$ and $(\bar f,\bar X)$ 
to be the function $\mathcal{H} :\mathscr{M}^{2,p}_{-q}\times W^{1,p}_{-1-q} \too \mathbb{R} $ given by 
\begin{align}\label{eqn:functional}
\mathcal{H} (\gamma, \tau) =(n-1)\omega_{n-1} \left[2 a E(\gamma,\tau) + b \cdot P(\gamma, \tau)\right] - \int_M \Phi^{\varphi}_{(g,\pi)}(\gamma, \tau) \cdot (\bar f, \bar X)\, d\mu_g
\end{align}
where  $E(\gamma,\tau)$ and $P(\gamma, \tau)$ denote the ADM energy-momentum of the specified end, and
the volume measure $d\mu_g$ and the inner product in the integral are both with respect to~$g$.
\end{defin}
As explained in~\cite{Huang-Lee:2020}, 
although the terms in~\eqref{eqn:functional} are not individually well-defined for arbitrary $(\gamma, \tau) \in \mathscr{M}^{2,p}_{-q}\times W^{1,p}_{-1-q}$,
 the functional $\mathcal{H}$  extends to all of $\mathscr{M}^{2,p}_{-q}\times W^{1,p}_{-1-q}$ in a natural way.

In what follows, we will assume that $a$ and $b$ satisfy $2a\ge |b|$.
Continuing to follow~\cite{Huang-Lee:2020} (with input from~\cite{LLU} for the boundary case), we define
\begin{multline*}
	\mathcal{C}_{(g,\pi)} = \big\{ (\gamma, \tau) \in \mathscr{M}^{2,p}_{-q}\times W^{1,p}_{-1-q} \,\big|\, \Phi^{\varphi}_{(g,\pi)}(\gamma, \tau) = \Phi^{\varphi}_{(g,\pi)}(g, \pi)\text{ and $	\partial M$ (if nonempty) has }\\
	\text{the same outward null expansion $\theta^+$ with respect to both }(\gamma,\tau)\text{ and }(g,\pi)
	\big\}.
\end{multline*}
We claim that that if $(g,\pi)$ satisfies the DEC with $\partial M$  weakly outer trapped (if nonempty) and has $E=|P|=0$, then 
$(g,\pi)$ is a local minimizer of $\mathcal{H}$  in $\mathcal{C}_{(g,\pi)}$. To see why, observe that for all nearby $(\gamma, \tau)\in \mathcal{C}_{(g,\pi)}$, $(\gamma, \tau)$ also satisfies the DEC (as mentioned above, due to~\cite[Lemma 3.4]{Huang-Lee}) and has weakly outer trapped boundary, so the assumption that the positive mass theorem is true near $(g,\pi)$ tells us that $E(\gamma, \tau) \ge |P(\gamma, \tau)|$. (Technically, since $(\gamma, \tau)$ only has Sobolev regularity, we must invoke~\cite[Theorem 4.1]{Huang-Lee:2020} and~\cite[Lemma 4.2]{LLU}.) Therefore
\begin{multline*}
\tfrac{1}{(n-1)\omega_{n-1}}\big(\mathcal{H}(\gamma, \tau)-\mathcal{H}(g, \pi)\big)=2a E(\gamma,\tau) +b \cdot P(\gamma, \tau) \\
\ge 2a  E(\gamma,\tau) - 2a |P(\gamma,\tau)| 
	= 2a\big( E(\gamma,\tau) - |P(\gamma, \tau)|\big) \ge 0 ,
\end{multline*}
where we used the assumptions that $2a\ge |b|$ and $(g,\pi)$ has $E=|P|=0$. Hence the claim is true.

Thus $(g,\pi)$ locally minimizes the functional $\mathcal{H}$ subject to the constraint $\mathcal{C}_{(g,\pi)}$. In the finite dimensional setting, as long as the constraint  carves out a smooth submanifold of a linear space, we can use the method of Lagrange multipliers to gain information about the minimizer. 
In our more general Banach space setting, in order to construct Lagrange multipliers, all we need is for $\mathcal{C}_{(g,\pi)}$ to be the level set of a map on $\mathscr{M}^{2,p}_{-q}\times W^{1,p}_{-1-q}$ whose linearization is surjective (see~\cite[Theorem D.1]{Huang-Lee:2020} for details).  For $\varphi=0$, this surjectivity was established in~\cite[Lemma~2.10]{Huang-Lee:2020} for the case of no boundary, and in~\cite[Proposition 3.9]{LLU} when boundary is present. Meanwhile, the introduction of $\varphi$ does not change anything in these arguments. 

Invoking Lagrange multipliers exactly as in~\cite{Huang-Lee:2020} (or~\cite{LLU} for the boundary case) leads to the existence of a lapse-shift pair $(f,X)\in  C^{2,\alpha}_{\mathrm{loc}}(\Int M)$ such that
\begin{align*}
&\left(\left.D\Phi^{\varphi}_{(g,\pi)}\right|_{(g,\pi)}\right)^*(f, X)=0 \quad \mbox{ in }\Int M,\\
&(f, X)= \left(a, \sum b^i\frac{\partial}{\partial x^i}\right) + O_{2+\alpha}(|x|^{-q})\text{ in the specified end where $E=0$, and}\\
& (f, X)= O_{2+\alpha}(|x|^{-q})\text{ in each other end,}
\end{align*}
where $O_{2+\alpha}(|x|^{-q})$ denotes an object that decays in $C^{2,\alpha}_{-q}$.
Although we will not use this fact elsewhere in the paper, we remark that the existence of even one such lapse-shift pair implies that $M$ can only have one asymptotically flat end.\footnote{The same argument also applies to the $E=|P|$ case and leads to the following fact. If $(M, g, \pi)$ is a complete asymptotically flat initial data set satisfying the dominant energy condition with $E=|P|$, and if the positive mass theorem is true near $(g, k)$, then $M$ has only one end. Note that this is consistent with the pp-waves counterexamples having $E=|P|\neq 0$ from~\cite[Example 7]{Huang-Lee}, which all have only one end.} 
This is because~\cite[Theorem 3.3]{Christodoulou-OMurchadha:1981} implies that it is impossible for a nontrivial solution of~\eqref{eqn:single} to asymptotically vanish in an end.

As described above, as long as we select $\varphi$ so that Theorem~4.1 of~\cite{Huang-Lee} applies, this $(f,X)$ also satisfies~\eqref{eqn:pair} on $\Int M$.
Since we can construct such a solution for any $(a,b)\in \rr^{n+1}$ satisfying $2a\ge |b|$, and the space of all $(f, X)$ satisfying~\eqref{eqn:pair} forms a vector space $\mathcal{N}$, it follows that for \emph{any} $(a,b)\in \rr^{n+1}$, we obtain a lapse-shift pair $(f, X)$ which is both asymptotic to $\left(a, \sum b^i\frac{\partial}{\partial x^i}\right)$ in the desired end and also satisfies \eqref{eqn:pair}.  Moreover, the dimension of $\mathcal{N}$ is at least $n+1$.

But Corollary 6.6 of~\cite{Huang-Lee} states that if the dimension of $\mathcal{N}$ is greater than 1, then $(g, \pi)$ must be vacuum initial data. Since $(g, \pi)$ is vacuum, $\Phi^\varphi_{(g,\pi)}=\Phi^0_{(g,\pi)}=\Phi$. The intuition behind Corollary 6.6 of \cite{Huang-Lee} is the following: $(f, X)$ lying in the kernel of the overdetermined Hessian-type operator $\left(\left.D\Phi^{0}_{(g,\pi)}\right|_{(g,\pi)}\right)^*$ already means that $(f, X)$ is determined by its $1$-jet at any point. If there is a point $p$ where $J\ne0$, we can use~\eqref{eqn:pair} to write $\nabla f$ in terms of $f$, and hence the $0$-jet of $f$ at $p$ determines the $1$-jet of $f$ at $p$, which then determines the $1$-jet of $(f, X)$ at $p$ by the $J$-null-vector equation, which determines $(f, X)$ everywhere. For details, see~\cite{Huang-Lee}.

Putting together the arguments described in this section, we have proved the following lemma.
\begin{lemma}\label{lem:KID}
Let $(M, g, \pi)$ satisfy the hypotheses of Theorem~\ref{thm:main}, but suppose that $E=0$. 
Then $(g,\pi)$ is vacuum, and for any $(a, b)\in \rr^{n+1}$, 
there exists a lapse-shift pair $(f,X)\in  C^{2,\alpha}_{\mathrm{loc}}(\Int M)$ such that
\begin{align*}
&\left(\left.D\Phi\right|_{(g,\pi)}\right)^*(f, X)=0 \quad \mbox{ in }\Int M\\
&(f, X)= \left(a, \sum b^i\frac{\partial}{\partial x^i}\right) + O_{2+\alpha}(|x|^{-q}) \text{ in the specified end  where $E=0$}.
\end{align*}
\end{lemma}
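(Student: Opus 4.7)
The plan is to organize the construction that is already sketched in Section~\ref{sec:Lagrange} into a clean proof. First I would fix $(a,b)\in\mathbb{R}^{n+1}$ with $2a\ge|b|$ (the general case is recovered at the end by linearity), and pick a smooth reference lapse-shift pair $(\bar f,\bar X)$ equal to $\left(a,\sum b^i\tfrac{\partial}{\partial x^i}\right)$ in the prescribed end and vanishing on the other ends and near $\partial M$. Choose a bounded $\varphi$ in the dense set $\mathcal D$ supplied by Theorem~4.1 of~\cite{Huang-Lee}, and form the $\varphi$-modified Regge-Teitelboim Hamiltonian $\mathcal H$ of Definition~\ref{def:functional}.

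The central observation is that $(g,\pi)$ is a local minimizer of $\mathcal H$ on the constraint set $\mathcal{C}_{(g,\pi)}$. I would verify this by combining three inputs: (i) \cite[Lemma~3.4]{Huang-Lee} shows that any nearby $(\gamma,\tau)\in\mathcal{C}_{(g,\pi)}$ still satisfies the DEC and the weakly outer trapped boundary condition; (ii) the hypothesis that the positive mass theorem holds near $(g,\pi)$, extended to $W^{2,p}$ initial data via \cite[Theorem~4.1]{Huang-Lee:2020} and \cite[Lemma~4.2]{LLU}, then gives $E(\gamma,\tau)\ge|P(\gamma,\tau)|$; (iii) the hypotheses $E=|P|=0$ and $2a\ge|b|$ combine to yield $\mathcal H(\gamma,\tau)\ge\mathcal H(g,\pi)$ as in the display in the excerpt.

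Because $\mathcal{C}_{(g,\pi)}$ is cut out by a map whose linearization is surjective (\cite[Lemma~2.10]{Huang-Lee:2020} in the boundaryless case and \cite[Proposition~3.9]{LLU} in the boundary case), I apply the Banach-space Lagrange multiplier theorem~\cite[Theorem~D.1]{Huang-Lee:2020} to obtain a lapse-shift pair $(f,X)\in C^{2,\alpha}_{\mathrm{loc}}(\Int M)$ solving $\left(\left.D\Phi^{\varphi}_{(g,\pi)}\right|_{(g,\pi)}\right)^*(f,X)=0$ with the prescribed asymptotic expansion. Since $\varphi\in\mathcal D$, Theorem~4.1 of~\cite{Huang-Lee} upgrades this to a solution of the full system~\eqref{eqn:pair}, so $(f,X)$ lies in the vector space $\mathcal N$ of solutions to~\eqref{eqn:pair}. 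Letting $(a,b)$ range over a basis of $\mathbb{R}^{n+1}$ contained in the half-space $\{2a\ge|b|\}$ and using linearity of $\mathcal N$, I deduce that $\mathcal N$ contains a solution asymptotic to $\left(a,\sum b^i\tfrac{\partial}{\partial x^i}\right)$ for \emph{every} $(a,b)\in\mathbb{R}^{n+1}$; in particular $\dim\mathcal N\ge n+1>1$.

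Finally, Corollary~6.6 of~\cite{Huang-Lee} states that $\dim\mathcal N>1$ forces $(g,\pi)$ to be vacuum, so $J\equiv 0$ and the $\varphi$-correction in~\eqref{eqn:modified} vanishes identically, giving $\Phi^{\varphi}_{(g,\pi)}=\Phi$. The adjoint equation therefore reduces to $\left(\left.D\Phi\right|_{(g,\pi)}\right)^*(f,X)=0$, which is exactly the conclusion of the lemma. The main technical obstacle I expect is item (ii) above: one must justify applying the positive mass theorem not to a smooth perturbation of $(g,\pi)$ but to a $W^{2,p}$ element of the constraint set, which is precisely why the low-regularity extensions in~\cite[Theorem~4.1]{Huang-Lee:2020} and~\cite[Lemma~4.2]{LLU} are indispensable. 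Everything else amounts to sequencing the black-box results from~\cite{Huang-Lee:2020}, \cite{LLU}, and~\cite{Huang-Lee}.
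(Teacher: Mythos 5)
Your proposal is correct and follows essentially the same route as the paper: the same modified Regge--Teitelboim Hamiltonian, the same local-minimization argument on $\mathcal{C}_{(g,\pi)}$ via \cite[Lemma~3.4]{Huang-Lee} and the nearby positive mass theorem, the same Lagrange multiplier step with the cited surjectivity results, the same passage from the cone $2a\ge|b|$ to all of $\rr^{n+1}$ by linearity of $\mathcal N$, and the same appeal to Corollary~6.6 of \cite{Huang-Lee} to conclude vacuum and hence $\Phi^{\varphi}_{(g,\pi)}=\Phi$. No gaps; the ordering and the supporting citations match the paper's Section~\ref{sec:Lagrange}.
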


\begin{defin}\label{def:KID}
Let $(M, g, \pi)$ be a vacuum initial data set. Then any lapse-shift pair $(f, X)$ satisfying $\left(\left.D\Phi\right|_{(g,\pi)}\right)^*(f, X)=0$
is called  \emph{vacuum Killing initial data (or a vacuum KID)} on $(M, g, \pi)$.
\end{defin}

\section{Killing fields}\label{sec:Killing}

In this section, we prove Theorem~\ref{thm:Killing0}, whose proof is completely unrelated to the material in the previous section. After that, we will explain how to combine Lemma~\ref{lem:KID} with Theorem~\ref{thm:Killing0} to prove our main theorem, Theorem~\ref{thm:main}. 

But first, we recall how the Killing condition for vector fields on a spacetime restricts to a spacelike slice. This computation was made in~\cite{Beig-Chrusciel:1997} (written in different notation).\footnote{We provided details of this computation in lapse-shift coordinates in the proof of~\cite[Corollary B.3]{Huang-Lee}, but this required the assumption that
 $U$ is transverse to $\mathbf Y$. Performing the computation in Gaussian coordinates as in~\cite{Beig-Chrusciel:1997} shows that this assumption is unnecessary.}

\begin{lemma}\label{le:lapse-shift}
Let $(\mathbf N, \mathbf g)$ be a spacetime admitting a Killing vector field $\mathbf Y$. Let $U$ be a smooth spacelike hypersurface with the induced data $(g, \pi)$ and future unit normal $\mathbf n$. Write $\mathbf Y= 2f \mathbf n+ Y$ along $U$. Then $(f, X)$ satisfies the following  system along $U$, for $i, j, \ell=1,\dots, n$,
\begin{align}\label{eq:Killing-lapse-shift}
\begin{split}
	\tfrac{1}{2} (L_X g)_{ij} &= \left(\tfrac{2}{n-1}(\tr_g \pi) g_{ij} - 2\pi_{ij}\right) f\\
	- \nabla^2_{ij} f + (\Delta_g f)g_{ij} &= \tfrac{1}{2} (L_X \pi)_{ij} + \left( \mathbf G_{ij} -G_{ij}\right) f \\
 &\quad - \left[-\tfrac{3}{n-1}(\tr_g \pi)\pi_{ij} + 2\pi_{i\ell}\pi^{\ell j} \right] f\\
	&\quad -\left[\left( \tfrac{1}{2(n-1)} (\tr_g \pi)^2 - \tfrac{1}{2} |\pi|_g^2 \right) g_{ij}\right] f
\end{split}
\end{align}
where $\mathbf G_{ij}$ denotes the Einstein tensor of $\mathbf g$ restricted on the tangent bundle of $U$, and $G_{ij}$ is the Einstein tensor of~$g$. Or equivalently, in terms of the constraint operator,
\begin{align}\label{eq:Killing-lapse-shift-constraints}
\left(\left.D\Phi\right|_{(g,\pi)}\right)^*(f, X) &= \Big( -\left(\mathbf G_{ij}+\mu g_{ij}\right) f - 
\tfrac{1}{2}g_{ij}\langle J, X\rangle_g  -(J\odot X)_{ij}, 0\Big)
\end{align}
where $(J\odot X)_{ij} := \tfrac{1}{2} (J_i X_j+J_j X_i)$.
\end{lemma}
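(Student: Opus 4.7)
My plan is to perform the computation in Gaussian normal coordinates adapted to $U$: choose coordinates so that $\mathbf g = -dt^2 + g_t$ with $U = \{t=0\}$, $g_0 = g$, and $\mathbf n = \partial_t$. Write $\mathbf Y = 2f\,\partial_t + Y^i\partial_i$, so that the spatial components $Y^i|_{t=0}$ coincide with $X$. I will use the standard Gaussian-coordinate identity $\partial_t g_{ij}|_{t=0} = 2k_{ij}$ (where $k$ is the second fundamental form for $\mathbf n$) together with the inverse relation $k_{ij} = \pi_{ij} - \tfrac{1}{n-1}(\tr_g \pi) g_{ij}$.

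The first equation in~\eqref{eq:Killing-lapse-shift} I would extract directly from the $(ij)$-component of the Killing equation $L_{\mathbf Y}\mathbf g = 0$ restricted to $U$:
\[
0 = (L_{\mathbf Y}\mathbf g)_{ij}\big|_U = 2f\cdot \partial_t g_{ij}\big|_{t=0} + (L_X g)_{ij} = 4f k_{ij} + (L_X g)_{ij},
\]
and then substitute for $k$ in terms of $\pi$.

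For the second equation I would differentiate the tangential Killing relation once more in $t$ and restrict to $U$. The $(0i)$-component of the Killing equation yields $\partial_t Y_i|_{t=0} = -2\partial_i f$, and the Gaussian-coordinate ADM evolution law
\[
\partial_t k_{ij} = -\mathbf R^{\mathbf g}_{i0j0} - k_i{}^\ell k_{\ell j}
\]
lets me rewrite the resulting relation as one involving $\nabla^2 f$, $L_X k$, $\mathbf R^{\mathbf g}_{i0j0}$, and quadratic-in-$k$ terms. Isolating $-\nabla^2_{ij} f + (\Delta_g f) g_{ij}$ by subtracting the appropriate trace, then assembling $\mathbf R^{\mathbf g}_{i0j0}$ with its trace into $\mathbf G_{ij}$ and using the Gauss equation to convert the remaining spacetime curvature contributions into $G_{ij}$ plus quadratic terms in $k$, I finally convert $k\to\pi$ using $\tr k = -\tfrac{1}{n-1}(\tr_g \pi)$ and $k_i{}^\ell k_{\ell j} = \pi_i{}^\ell \pi_{\ell j} - \tfrac{2}{n-1}(\tr_g \pi)\pi_{ij} + \tfrac{1}{(n-1)^2}(\tr_g \pi)^2 g_{ij}$; this reproduces exactly the second line of~\eqref{eq:Killing-lapse-shift}.

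To verify the equivalent formulation~\eqref{eq:Killing-lapse-shift-constraints}, I would invoke the well-known explicit formula for $\left(\left.D\Phi\right|_{(g,\pi)}\right)^*$ and the definitions $2\mu = R_g + \tfrac{1}{n-1}(\tr_g \pi)^2 - |\pi|_g^2$ and $J^i = -2\nabla_j \pi^{ij}$ to collapse the scalar-curvature and momentum-divergence remainders into the $\mu g_{ij} f$, $\tfrac{1}{2} g_{ij}\langle J, X\rangle_g$, and $J\odot X$ terms. The main obstacle I anticipate is purely bookkeeping: tracking signs and the $\tfrac{1}{n-1}$ coefficients through the $k\leftrightarrow\pi$ conversion is the principal source of potential error. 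To control this I would carry out the whole derivation in $(g,k)$ variables first and only substitute $\pi$ at the final step, mirroring Beig--Chru\'{s}ciel~\cite{Beig-Chrusciel:1997}, the original reference cited for this computation.
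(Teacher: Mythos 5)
Your plan is sound and would prove the lemma, but it is a genuinely different route from the one the paper writes down. The paper does not redo the Gaussian-coordinate computation from scratch; instead it decomposes $U$ into the open set $U_1=\{f\neq 0\}$, where $\mathbf Y$ is transverse to $U$ and the system \eqref{eq:Killing-lapse-shift} is simply quoted from the earlier lapse-shift-coordinate computation in \cite[Corollary B.3]{Huang-Lee}, extends to $\overline{U_1}$ by continuity of $(f,X)$, and then handles $\Int\{f=0\}$ separately: there $\mathbf Y=X$ is tangent to $U$, and a short argument using the tangential Killing equation, one normal derivative of $L_{\mathbf Y}\mathbf g$, and the Codazzi equation shows $(L_Xg)_{ij}=(L_X\pi)_{ij}=0$, so both sides of \eqref{eq:Killing-lapse-shift} vanish identically there. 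Your single uniform computation in Gaussian coordinates is precisely what the paper's footnote attributes to \cite{Beig-Chrusciel:1997} as the way to remove the transversality hypothesis, so it is a legitimate and more self-contained alternative; the paper's version buys brevity by recycling prior work at the cost of a case split. Your check of the first equation is correct, and your strategy for the second (differentiate the tangential Killing relation in $t$, use the evolution equation for $k$, reassemble the curvature into $\mathbf G_{ij}-G_{ij}$ via the Gauss equation, then convert $k\to\pi$) is the standard and correct one. Two bookkeeping warnings, of the kind you anticipated: in Gaussian coordinates the $(0i)$ Killing equation gives $g_{ij}\partial_tY^j=2\partial_if$, not $\partial_tY_i=-2\partial_if$; and with the paper's conventions \eqref{eqn:constraints} and \eqref{eqn:k} one has $J=\Div_g\pi$ and $\Phi=(2\mu,J)$, so you should not import the normalization $J^i=-2\nabla_j\pi^{ij}$ from other references when verifying \eqref{eq:Killing-lapse-shift-constraints}.
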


We recall the statement of Theorem~\ref{thm:Killing0}.
\begin{manualtheorem}{\ref{thm:Killing0}}
Let $(\mathbf N, \mathbf g)$ be an $(n+1)$-dimensional connected $C^2$ Lorentzian spacetime, possibly with boundary, such that a subset of $\mathbf N$ is diffeomorphic to $\rr\times (\rr^n\setminus \bar B_1(0))$, and in these $x^0,\ldots,x^n$ coordinates, 
\[
	\mathbf g_{\mu\nu} (x^0, x)= \eta_{\mu\nu} + O_2(|x|^{-\beta})
\]
for some $\beta>0$. Assume that $(\mathbf N, \mathbf g)$ is equipped with Killing fields $\mathbf Y_0,\ldots, \mathbf Y_n$  such that
 \[ \mathbf{Y}_\mu=\frac{\partial}{\partial x^\mu} + O_2(|x|^{-\beta}). \]
Then $\mathbf Y_0,\ldots, \mathbf Y_n$ form a covariant constant global Lorentzian orthonormal frame, and in particular, $\mathbf g$ must be flat.
\end{manualtheorem}

\begin{proof}

%Assume the hypotheses of Theorem~\ref{thm:Killing0}. In particular, for each index $\mu=0,\ldots, n$, we assume the existence of a
%Killing field $\mathbf Y_\mu$ that is (spatially) asymptotic to the coordinate vector field $\frac{\partial}{\partial x^\mu}$, and the basis $\frac{\partial}{\partial x^0}, \ldots, \frac{\partial}{\partial x^n}$ is (spatially) asymptotically a (Lorentzian) orthonormal basis.

We first claim that for any $\mu, \nu =0, \ldots, n$, $[\mathbf Y_{\mu},\mathbf Y_{\nu} ] =0$. To see why, let $\mathbf Y =[\mathbf Y_{\mu},\mathbf Y_{\nu} ]$. Since Killing fields form a Lie algebra under Lie bracket,  $\mathbf Y$ is also a Killing field.   Let $U\subset \mathbf N$ be any constant $x^0$ slice of the asymptotically flat end of $\mathbf N$. If $\mathbf n$ is the future unit normal to $U$, then we can decompose $\mathbf Y=2f\mathbf n+X$ and  $\mathbf Y_\mu=2f_\mu\mathbf n+X_\mu$ along $U$, and routine computation shows that
\begin{align*}
    f &= X_\mu (f_\nu) - X_\nu (f_\mu) \\
    X &= [X_\mu, X_\nu] +4(f_\mu \nabla f_\nu- f_\nu \nabla f_\mu).
\end{align*}
By our asymptotic assumptions, it follows that $(f, X)= O_1(|x|^{-\beta-1})$. By Lemma~\ref{le:lapse-shift},   $(f, X)$ satisfies the system~\eqref{eq:Killing-lapse-shift}, which in turn implies that $(f, X)$ satisfies a homogeneous system of Hessian-type equations.  By~\cite[Theorem 3.3]{Christodoulou-OMurchadha:1981}, any $(f, X)$ satisfying such a system of equations with decay in  $O_1(|x|^{-\beta-1})$ must vanish identically. 

%The basic idea is that since $(f, X)$ satisfies homogeneous Hessian-type equations with decaying coefficients, one can use an ODE argument to show that  decay of $(f, X)$ implies vanishing of $(f,X)$. See tou for example. Hence $\mathbf Y$ also vanishes identically since a Killing field is uniquely determined by its values along any hypersurface, proving the claim.

At any point $p\in\mathbf N$, define $K(p)$ to be the maximum absolute value of all sectional curvatures of $\mathbf g$ at $p$. Thus $K(p)=0$ if and only the full Riemann curvature tensor of $\mathbf g$ vanishes at $p$. Define the set
 \begin{align*}
\begin{split}
     \mathbf{V}:=\big\{ p\in \Int \mathbf{N}\,\big|\, K(p)=0,\, \{\mathbf{Y}_0,\ldots, \mathbf{Y}_n\}\text{ is a Lorentzian orthonormal frame at }p,\\
     \text{ and } \nabla \mathbf Y_0(p)=\cdots =\nabla \mathbf Y_n(p)=0\big\}.
      \end{split}
 \end{align*} 
 We will prove that $\mathbf{V}=\Int \mathbf{N}$, which implies the desired result on all of $\mathbf{N}$ by continuity.
 
 Our first goal is to prove that $\mathbf{V}$ is non-empty, and here is where we must use our asymptotic assumptions.  The fact that $\mathbf{Y}_1$ is asymptotic to $\frac{\partial}{\partial x^1}$ guarantees that there exists a ball $B$ in the asymptotically flat end of $\mathbf{N}$ with the property that following any integral curve of $\mathbf{Y}_1$ from $B$ will lead out to spatial infinity in $\mathbf{N}$. Since $\mathbf{Y}_1$ is Killing, $K$ must be constant along an integral curve of $\mathbf{Y}_1$. But our asymptotic flatness assumptions tells us that $K(p)\to 0$ as $p$ approaches spatial infinity, so it follows that $K$ vanishes on $B$. Meanwhile, since $[\mathbf Y_1, \mathbf Y_\mu]=0$ and $\mathbf{Y}_1$ is Killing, it follows that for any $\mu, \nu$, the product $\mathbf{g}( \mathbf{Y}_{\mu},  \mathbf{Y}_{\nu})$ must be constant along integral curves of $\mathbf{Y}_1$. Since we know that the  $\mathbf{Y}_{\mu}$'s are asymptotically an orthonormal frame at infinity, it follows that they form an orthonormal frame at every point of~$B$. Finally, since the Riemann curvature vanishes on $B$, it is locally isometric to Minkowski space.
 Recall that we know exactly what every Killing field on Minkowski space looks like (even locally). Specifically, since the $\mathbf{Y}_{\mu}$'s have constant length
and the only Killing fields of Minkowski with constant length are the covariant constant ones (corresponding to translation symmetries), it follows that $\nabla\mathbf{Y}_{\mu}=0$ in $B$.  Hence $B\subset \mathbf{V}$.

It is clear that $\mathbf{V}$ is relatively closed in $\Int \mathbf{N}$. Therefore we can conclude that $\mathbf{V}=\Int \mathbf{N}$ if we can show that 
 $\mathbf{V}$ is also relatively open in $\Int\mathbf{N}$. Let $p\in \mathbf{V}$, and consider the set of all points that can be reached by following integral curves of elements of $\mathrm{span}(\mathbf{Y}_0,\ldots, \mathbf{Y}_n)$ originating at $p$. The key point is that since the  $\mathbf{Y}_\mu$'s are orthonormal at $p$, this set must include a small ball $B$ around $p$ in $\mathbf{N}$. But since $K(p)=0$ and $K$ must be constant along all of these integral curves for Killing fields, it follows that $K$ vanishes identically on $B$. Once again, using our knowledge of the Killing fields of Minkowski space, we know that since $\mathbf Y_\mu(p)\ne0$ and $\nabla \mathbf Y_\mu(p)=0$, it follows that $\mathbf Y_\mu$ must be covariant constant on $B$. Consequently, the $\mathbf{Y}_\mu$'s are orthonormal at every point of $B$. Thus $B\subset \mathbf{V}$, which is what we wanted to prove.

 \end{proof}

 By taking $(\mathbf N, \mathbf g)$ to be a trivial product, we can easily obtain a corresponding theorem in Riemannian geometry.
\begin{cor}\label{cor:KillingRiemannian}
Let $(M, g)$ be a connected $C^2$ Riemannian manifold, possibly with boundary, that is ``weakly\footnote{In other words, this result uses a much weaker definition of asymptotic flatness than the one given in Definition~\ref{def:AF}.}  asymptotically flat'' in the sense that a subset of $M$ is diffeomorphic to $\rr^n\setminus \bar B_1(0)$, and in these $x^1,\ldots,x^n$ coordinates, 
\[ g_{ij}=\delta_{ij} + O_2(|x|^{-\beta}),\]
for some $\beta>0$. Assume that $(M, g)$ is equipped with Killing fields $X_1,\ldots,X_n$ that are asymptotic to the coordinate translation directions in the sense that 
 \[ X_i=\frac{\partial}{\partial x^i} + O_2(|x|^{-\beta}). \]
Then $X_1,\ldots,X_n$ form a covariant constant global orthonormal frame, and in particular, $g$ must be flat.
\end{cor}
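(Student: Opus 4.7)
The plan is to apply Theorem~\ref{thm:Killing0} to the trivial product spacetime built from $(M,g)$, exactly as the authors hint. Concretely, I will set $\mathbf N := \rr \times M$ and $\mathbf g := -dt^2 + g$, where $t = x^0$ is the coordinate on the $\rr$ factor. The asymptotic region $\rr \times (\rr^n \setminus \bar B_1(0)) \subset \mathbf N$ then carries coordinates $x^0,x^1,\dots,x^n$, and since $\mathbf g_{00} = -1$, $\mathbf g_{0i}=0$, and $\mathbf g_{ij} = g_{ij}=\delta_{ij}+O_2(|x|^{-\beta})$, the decay hypothesis $\mathbf g_{\mu\nu}=\eta_{\mu\nu}+O_2(|x|^{-\beta})$ is immediate (note that the bounds are independent of $x^0$, which is harmless).

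Next, I will produce the $(n+1)$ Killing fields required by Theorem~\ref{thm:Killing0}. Take $\mathbf Y_0 := \partial/\partial t$, which is obviously Killing because $\mathbf g$ is $t$-independent, and take $\mathbf Y_i := X_i$ for $i=1,\dots,n$. Since $X_i$ has no $\partial_t$-component and is tangent to each slice $\{t\}\times M$, and since $X_i(t)=0$, one computes $\mathcal L_{X_i}\mathbf g = \mathcal L_{X_i}(-dt^2) + \mathcal L_{X_i} g = 0$, so each $\mathbf Y_i$ is Killing on $(\mathbf N,\mathbf g)$. The asymptotic conditions $\mathbf Y_0 = \partial/\partial x^0$ and $\mathbf Y_i = \partial/\partial x^i + O_2(|x|^{-\beta})$ follow directly from the hypothesis on the $X_i$.

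With both hypotheses verified, Theorem~\ref{thm:Killing0} gives that $\mathbf Y_0,\dots,\mathbf Y_n$ form a covariant constant global Lorentzian orthonormal frame and that $\mathbf g$ is flat. To extract the corollary, I will observe that on each slice $\{t\}\times M$, the induced Levi-Civita connection of $\mathbf g$ agrees with that of $g$ (since $\partial_t$ is itself a parallel unit normal in this product), so the conditions $\nabla^{\mathbf g}\mathbf Y_i=0$ and the $\mathbf g$-orthonormality of $\mathbf Y_1,\dots,\mathbf Y_n$ translate directly into $\nabla^g X_i=0$ and $g$-orthonormality of $X_1,\dots,X_n$ on $M$. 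Flatness of $g$ then follows either from flatness of $\mathbf g$ (the Riemann tensor of a product metric restricted to a slice is the Riemann tensor of the slice metric) or, alternatively, from the existence of a parallel orthonormal frame.

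There is no serious obstacle here: the work is essentially just packaging $(M,g)$ as a spacelike slice in a stationary product spacetime and checking that nothing is lost under the restriction. The one small sanity check worth performing carefully is that the $O_2$ asymptotics on $g$ and on $X_i$, which are stated with respect to $x=(x^1,\dots,x^n)$ only, remain valid as $O_2$ asymptotics in all $(n+1)$ spacetime variables in the sense required by Theorem~\ref{thm:Killing0}; but since both $\mathbf g - \eta$ and $\mathbf Y_\mu - \partial/\partial x^\mu$ are independent of $x^0$, the $\partial/\partial x^0$ derivatives of their components vanish identically, and the claim is trivial.
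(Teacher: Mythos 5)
Your proposal is correct and is exactly the argument the paper intends: the paper's entire ``proof'' of Corollary~\ref{cor:KillingRiemannian} is the one-line remark that one takes $(\mathbf N, \mathbf g)$ to be the trivial product $(\rr\times M, -dt^2+g)$ and applies Theorem~\ref{thm:Killing0}, and you have filled in the routine verifications (the $t$-independence handling the $O_2$ bounds in all $n+1$ variables, the lifted fields being Killing, and the restriction of parallelism and orthonormality to the totally geodesic slice) correctly.
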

Again, note that there is no completeness assumption here, but when $(M, g)$ is complete (without boundary), it  follows from the Killing-Hopf Theorem (and a simple covering space argument as in the proof of Theorem~\ref{thm:main} below) that $(M, g)$ is isometric to Euclidean space.

In order to connect Lemma~\ref{lem:KID} to Theorem~\ref{thm:Killing0}, we will use the concept of a Killing development, first introduced by Beig and Chru\'{s}ciel~\cite{Beig-Chrusciel:1996}.
\begin{defin}
Let $(M, g)$ be a Riemannian manifold, and let $(f_0, X_0)$ be a lapse-shift pair on $M$ with $f_0>0$ on $M$. The \emph{Killing development of $(M, g)$ with respect to  $(f_0, X_0)$} is the Lorentzian manifold $(\mathbf N:=\rr\times M, \mathbf g)$ where
\begin{equation}\label{eq:KillingDev}
\mathbf g :=  -4f_0^2 (dx^0)^2 + g_{ij}(dx^i + X_0^i dx^0)(dx^j + X_0^j dx^0),
\end{equation}
where $x^0$ denotes a new coordinate on the $\rr$ factor and $x^i$ are just any coordinates on the $M$ factor.    
\end{defin}
Observe that $\mathbf{Y}_0=\frac{\partial}{\partial x^0}$ is a Killing field on $(\mathbf N, \mathbf g)$. 

Now suppose that $(M, g, \pi)$ is a vacuum initial data set equipped with a vacuum KID $(f_0, X_0)$ with $f_0>0$ (recall  Definition~\ref{def:KID}). In this case, one can verify that the Killing development $(\mathbf N, \mathbf g)$ is vacuum and contains the initial data set $(M, g, \pi)$ in the sense that the $x^0=0$ slice is just the Riemannian manifold $(M, g)$ with second fundamental form $k$ (corresponding to $\pi$ via~\eqref{eqn:k}). For details, see~\cite{Beig-Chrusciel:1996} or~\cite[Appendix B]{Huang-Lee}.
%V. Moncrief~\cite{Moncrief:1975} showed that if  $(M, g, \pi)$ is a  vacuum initial data set, then there is a one-to-one correspondence between Killing fields on the maximal vacuum Einstein development of $(M, g, \pi)$ and vacuum KIDs on $(M, g, \pi)$.
%\begin{thm}[Moncrief]\label{thm:Moncrief}
%Let $(M, g, \pi)$ be a vacuum initial data set, and let $(\mathbf{N}, \mathbf{g})$ be the maximal vacuum Einstein development of $(M, g, \pi)$. 
%Then any vacuum Killing initial data $(f, X)$ on $(M, g, \pi)$ can be extended to a Killing field $\mathbf{Y}$ on $(\mathbf{N}, \mathbf{g})$ such that $\mathbf Y = 2f\mathbf n + X$ along $M$, where $\mathbf n$ denotes the future timelike unit normal to $M$ in $\mathbf N$.\footnote{We remark that the factor of $2$ appearing in the formula above is a consequence of the convention that places $2$ in front of $\mu$ in the definition of $\Phi$.  } 
%Moreover, this map provides a one-to-one correspondence between vacuum Killing initial data on $(M, g, \pi)$ and Killing fields on $(\mathbf N, \mathbf g)$.
%\end{thm}
 Furthermore, by~\eqref{eq:Killing-lapse-shift-constraints}, one can see that every Killing field $\mathbf Y$ on $(\mathbf N, \mathbf g)$  restricts to a vacuum KID $(f, X)$ on $(M, g,\pi)$.
%via the equation $\mathbf Y = 2f \mathbf{n} +X$, where $\mathbf n$ is the future unit normal of $M$ in $\mathbf N$. 
It turns out that this provides a one-to-one correspondence between Killing fields on $(\mathbf N, \mathbf g)$ and  vacuum KIDs on $(M, g,\pi)$. This fact is a simple special case of a theorem of Beig and Chru\'{s}ciel~\cite{Beig-Chrusciel:1997} that describes more general non-vacuum circumstances under which this one-to-one correspondence holds. A similar correspondence also holds for the maximal vacuum development (rather than the Killing development) by a classical result of  V. Moncrief~\cite{Moncrief:1975}.

\begin{proof}[Proof of Theorem~\ref{thm:main}]

Assume that $(M, g, \pi)$ satisfies the hypotheses of Theorem~\ref{thm:main}, but suppose that $E=0$. We want to show that $(M, g, \pi)$ embeds into Minkowski space. Invoking Lemma~\ref{lem:KID}, we can construct vacuum KIDs  $(f_0, X_0),(f_1,X_1),\ldots, (f_n,X_n)$ on $(M, g, \pi)$ such that on the specified end where $E=0$
\begin{align}
(2f_0, X_0)&=(1,0)+O_2(|x|^{-q}) \label{eq:f0} \\ 
(2f_i, X_i)&=\left(0,\frac{\partial}{\partial x^i} \right) +O_2(|x|^{-q}),\label{eq:fi}
\end{align}
for $i=1,\ldots,n$, where $q$ is the asymptotic decay rate of $(M, g, \pi)$.

Consider the open subset of $M$ where $f_0>0$, and let $U$ be the connected component of that set that contains our specified end. We define $(\mathbf N, \mathbf g)$ to be the Killing development of $(U, g)$ with respect to $(f_0, X_0)$. 
Note that asymptotic flatness of $(g,\pi)$ together with~\eqref{eq:f0} implies that the metric $\mathbf{g}$ given by~\eqref{eq:KillingDev} is asymptotically flat in the sense of Theorem~\ref{thm:Killing0}.

As discussed above, $(\mathbf N, \mathbf g)$ is vacuum and contains $(U, g, \pi)$ as the $x^0=0$ slice. Moreover, 
 each $(f_\mu, X_\mu)$ gives rise to a Killing field $\mathbf Y_\mu$. Note that the same argument used in the proof of Theorem~\ref{thm:Killing0} proves that for any $\mu, \nu =0, \ldots, n$, we have $[\mathbf Y_{\mu},\mathbf Y_{\nu} ] =0$. (This is because that part of the proof only used asymptotics of $\mathbf Y_\mu$ along $\{0\}\times U$, which we know because of~\eqref{eq:f0} and~\eqref{eq:fi}.)
 Recall that $\mathbf Y_0 =\frac{\partial}{\partial x^0}$. Since we also have $\mathbf Y_\mu = 2f_\mu \mathbf n + X_\mu$ along $\{0\}\times U$, we have  $\mathbf{n}=\frac{1}{2f_0} (\frac{\partial}{\partial x^0} -X_0)$, and consequently, 
 we also have
\[ 
\mathbf Y_i = \frac{f_i}{f_0}\left( \frac{\partial}{\partial x^0} -X_0\right)+X_i,
\]
for $i=1,\ldots,n$. Since $[\mathbf Y_0, \mathbf Y_i]=0$, this formula for $\mathbf Y_i$  holds on all $\mathbb R\times U$ rather than merely along $\{0\}\times U$, where $f_0, X_0, f_i, X_i$ are all extended to $\rr\times U$ to be independent of the $x^0$ variable. In particular, it follows that  $\mathbf Y_\mu$ is asymptotic to $\frac{\partial}{\partial x^\mu}$ in the sense required for application of Theorem~\ref{thm:Killing0}.

We can now invoke Theorem~\ref{thm:Killing0} to see that $\mathbf g$ is flat and $\{ \mathbf Y_0,\ldots, \mathbf Y_n\}$ is a covariant constant global orthonormal frame. In particular, $\mathbf{Y}_0$ has constant length $-1$ everywhere in~$\mathbf N$. Or in other words, $-4f_0^2 + |X_0|^2 = -1$ on all of $U$. This means that $2f_0\ge1$ on $U$, and consequently, $U$ must be all of $M$. So $(M, g, \pi)$ lies inside $(\mathbf N, \mathbf g)$.

The last step is to prove that $(\mathbf N, \mathbf g)$ is globally isometric to  Minkowski space. We will first consider the case where $M$ has no boundary, and then we will explain why $M$ having a nontrivial weakly outer trapped boundary is impossible.

Suppose $M$ has no boundary. Let $(\tilde{M}, \tilde{g}, \tilde{\pi})$ be the universal cover of $(M, g,\pi)$, and let 
 $(\tilde{\mathbf{N}}, \tilde{\mathbf{g}})$ be its Killing development with respect to the lift of $(f_0, X_0)$.
So  $\tilde{\mathbf{N}}$ is simply connected, and by what we proved above, $\tilde{\mathbf g}$ is flat. Moreover, the lack of boundary implies that
$(\tilde{\mathbf{N}}, \tilde{\mathbf{g}})$ is geodesically complete. (This is a nontrivial fact since $\mathbf g$ is Lorentzian, but it was previously explained in the proof of~\cite[Theorem 4.1]{Beig-Chrusciel:1996}.)
By the Killing--Hopf Theorem,\footnote{Technically, this is a Lorentzian version of the Killing--Hopf Theorem for flat manifolds, but the proof is  the same.} $(\tilde{\mathbf{N}}, \tilde{\mathbf{g}})$ must be isometric to Minkowski space. In particular, it follows that  $(\tilde{M}, \tilde{g}, \tilde{\pi})$ has only \emph{one} asymptotically flat end, and we claim that this implies that the covering map $\Pi:\tilde{M}\longrightarrow M$ is trivial, and the result follows. To see why the claim is true, let $\mathcal E$ be an asymptotically flat end of $M$, and consider the covering map from $\Pi^{-1}(\mathcal E)$ to $\mathcal E$ obtained by restriction of $\Pi$. Since $\mathcal E$ is simply connected, $\Pi^{-1}(\mathcal E)$ must be a disjoint union of isometric copies of $\mathcal E$. Since $\tilde{M}$ has only one asymptotically flat end, this implies that $\Pi$ is one-to-one.

 We show that $M$ cannot have a nontrivial weakly outer trapped boundary. Of course, it is well-known that it is impossible to have a weakly outer trapped boundary in Minkowski space, but we cannot apply this fact directly because at the moment we only know that our spacetime is both locally exactly Minkowski and asymptotically flat in the sense of Theorem~\ref{thm:Killing0}.  To complete the proof, we apply Theorem~\ref{thm:no-MOTS-stationary} where  we take ``$\Omega$'' to be the part of $M$ bounded by $\partial M$ and a large enough coordinate sphere so that it is strictly outer untrapped.

\end{proof}

\begin{rmk}
In the proof of Theorem~\ref{thm:main}, we primarily used the fact that $(g,\pi)$ is vacuum in order to  conclude that each $(f_i, X_i)$ extends to a Killing field on the Killing development of $(U,g)$ with respect to $(f_0, X_0)$. However, as mentioned earlier, a fascinating paper by Beig and Chru\'{s}ciel~\cite{Beig-Chrusciel:1997} explores when this is possible in non-vacuum scenarios, and it turns out that even without knowing that $(g,\pi)$ is vacuum, simply knowing that $(f_\mu, X_\mu)$ solves~\eqref{eqn:pair} turns out to be sufficient to conclude that each $(f_i, X_i)$ extends to a Killing field on the Killing development of $(U, g)$ with respect to $(f_0, X_0)$, and also that this Killing development satisfies the dominant energy condition~\cite{Huang-Lee}. This argument provides an alternative to invoking Corollary 6.6 of~\cite{Huang-Lee} when proving Theorem~\ref{thm:main}, but the argument that we presented above is simpler.
\end{rmk}

\section{Trapped and untrapped boundaries}\label{sec:boundary}

We prove Theorem~\ref{thm:no-MOTS-stationary} in this section, which is a direct consequence of the following more general result.  
\begin{thm}\label{thm:no-MOTS-alt}
Let $\Omega$ be a compact connected manifold with boundary such that $\partial \Omega=\Sigma_{\mathrm{in}}\sqcup \Sigma_{\mathrm{out}}$ where $\Sigma_{\mathrm{in}}$ and $\Sigma_{\mathrm{out}}$ are both closed and nonempty, and let $g_0$ be a $C^2$ Riemannian metric on $\Omega$ (which will only serve as a fixed Riemannian background metric). 
Define $\mathbf{N}= [0,\infty)\times \Omega$ and $h_0= dt^2 + g_0$, where $t$ denotes the coordinate function for the $[0,\infty)$ factor.

Let $\mathbf{g}$ be a Lorentzian metric on $\mathbf{N}$ that is $C^2$ in the sense that it extends to a $C^2$ Lorentzian metric on some manifold $\mathbf{N}'$ containing $\mathbf{N}$, and assume that $|\mathbf{g}|_{h_0}$ is uniformly bounded on~$\mathbf{N}$. We also assume that $t$ is a time function such that $\mathbf g \left(\frac{\partial}{\partial t}, \frac{\partial}{\partial t}\right)\le -\epsilon<0$ on all of $\mathbf{N}$ for some uniform constant $\epsilon$,
 and that each $\{t\}\times \Omega$ slice is spacelike. 

Now suppose that $\mathbf g$ satisfies the null energy condition, and that for 
all $t>0$, $\{t\}\times \Sigma_{\mathrm{out}}$ is a strictly outer untrapped surface (that is, $\theta^+>0$) with respect to the ``outward'' future null normal pointing out of $\mathbf{N}$. Then $\{0\}\times \Sigma_{\mathrm{in}}$ cannot be a weakly outer trapped surface (that is, it cannot have $\theta^+\le0$ on all of $\{0\}\times \Sigma_{\mathrm{in}}$) with respect to the ``outward'' future null normal pointing toward the interior of $\mathbf N$.
\end{thm}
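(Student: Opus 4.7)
The plan is to argue by contradiction. Suppose $\theta^+\le 0$ on $\{0\}\times \Sigma_{\mathrm{in}}$ with respect to the future null normal pointing into $\mathbf N$. The first step is to form the null hypersurface $H$ obtained by exponentiating $\{0\}\times \Sigma_{\mathrm{in}}$ along its outward future null geodesic congruence. By the null energy condition and Raychaudhuri's equation, the expansion along the generators of $H$ is nonincreasing, so $\theta^+\le 0$ on the regular part of $H$, and moreover any generator on which $\theta^+$ is ever strictly negative develops a focal point within affine parameter at most $(n-1)/|\theta^+|$.

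The second step is to locate where $H$ first reaches $\mathbb{R}\times \Sigma_{\mathrm{out}}$. Define
\[
\tau(q) := \inf\bigl\{\, t\ge 0 : (t,q) \in J^+(\{0\}\times \Sigma_{\mathrm{in}}) \,\bigr\}, \qquad q \in \bar{\Omega}.
\]
Because $|\mathbf g|_{h_0}$ is uniformly bounded and $\mathbf g(\partial_t,\partial_t)\le -\epsilon$, the $\mathbf g$-light cones are uniformly wide with respect to $h_0$, so every point $(t,q)$ with $t$ sufficiently large lies in $J^+(\{0\}\times\Sigma_{\mathrm{in}})$; hence $\tau$ is finite on $\bar{\Omega}$ and lower semi-continuous by standard causal theory. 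Since $\{0\}\times \Omega$ is spacelike and $\Sigma_{\mathrm{in}}\cap \Sigma_{\mathrm{out}}=\emptyset$, we have $\tau>0$ on the compact set $\Sigma_{\mathrm{out}}$, so $t^*:=\min_{\Sigma_{\mathrm{out}}}\tau>0$ is attained at some $q^*\in \Sigma_{\mathrm{out}}$. A limit-curve argument produces a future-directed null geodesic $\gamma^*$ from some $p^*\in \Sigma_{\mathrm{in}}$ to $(t^*,q^*)$, lying in the achronal set $\partial J^+(\{0\}\times\Sigma_{\mathrm{in}})$ and free of focal points; in particular $\gamma^*$ is a smooth generator of $H$ up to the endpoint, and the infimum property of $t^*$ forces the tangent $\ell^*$ of $\gamma^*$ at $(t^*,q^*)$ to lie in $T_{(t^*,q^*)}(\mathbb{R}\times \Sigma_{\mathrm{out}})$.

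The final step is to derive a contradiction at $(t^*,q^*)$ by playing off $\theta^+(H)\le 0$ against $\theta^+(\{t^*\}\times \Sigma_{\mathrm{out}})>0$. The smooth cross-section $S := H\cap (\{t^*\}\times \Omega)$ is a spacelike hypersurface inside the slice, tangent to $\Sigma_{\mathrm{out}}$ at $q^*$ and lying on the interior side. The plan is to apply a Galloway-type null maximum principle comparing $H$ with the null hypersurface generated from $\{t^*\}\times \Sigma_{\mathrm{out}}$ along the outward future null normal (which makes sense because $\mathbf g$ extends to $\mathbf N'$), using the interior-contact geometry at $(t^*,q^*)$ to force $\theta^+(H)>0$ there, contradicting $\theta^+(H)\le 0$ and thereby ruling out the assumed weakly outer trapped $\{0\}\times \Sigma_{\mathrm{in}}$.

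The main technical obstacle is precisely this last comparison. Because $\ell^*$ is tangent to $\mathbb{R}\times \Sigma_{\mathrm{out}}$ rather than normal to $\Sigma_{\mathrm{out}}$, the two ``$\theta^+$'' quantities are computed against different null directions and cannot be compared by a naive support-hypersurface argument. The most promising route around this is to perturb $\Sigma_{\mathrm{out}}$ slightly inward into a family $\Sigma_\epsilon\subset \Omega$ with $\theta^+(\Sigma_\epsilon)>0$ (available by continuity of $\theta^+$ in $\epsilon$), rerun the above construction to produce a first-touch triple $(t^*_\epsilon,q^*_\epsilon,\gamma^*_\epsilon)$ with $\gamma^*_\epsilon$ possibly hitting $\mathbb{R}\times \Sigma_\epsilon$ transversally, carry out the expansion comparison in this transverse regime directly via the Riccati-type Raychaudhuri equation along $\gamma^*_\epsilon$, and then pass to the limit $\epsilon\to 0$ to extract the desired contradiction.
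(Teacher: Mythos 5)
Your overall skeleton matches the paper's: argue by contradiction, locate the first time $t^*$ at which the future of $\{0\}\times\Sigma_{\mathrm{in}}$ reaches $\Sigma_{\mathrm{out}}$, extract a focal-point-free null generator $\gamma^*$ of the boundary of that future ending at $(t^*,q^*)$, and play the Raychaudhuri equation against the untrapped condition on $\Sigma_{\mathrm{out}}$. But there is a genuine gap at the decisive step, and it stems from a misidentification of the arrival direction. The minimality of $t^*$ does \emph{not} force the tangent $\ell^*$ to lie in $T_{(t^*,q^*)}(\mathbb{R}\times\Sigma_{\mathrm{out}})$; what it forces is that $\gamma^*$ meets the codimension-two surface $\{t^*\}\times\Sigma_{\mathrm{out}}$ orthogonally, i.e.\ $\ell^*$ is proportional to the \emph{outward future null normal} of $\{t^*\}\times\Sigma_{\mathrm{out}}$. (If the arrival were not normal, the standard deformation of a non-normally arriving causal curve would produce a timelike curve from $\{0\}\times\Sigma_{\mathrm{in}}$ to $\{t^*\}\times\Sigma_{\mathrm{out}}$, hence by openness of $I^+$ to $\{t^*-\delta\}\times\Sigma_{\mathrm{out}}$, contradicting minimality; the same reasoning excludes focal points of $\{t^*\}\times\Sigma_{\mathrm{out}}$ along $\gamma^*$ away from its endpoint.) Once this is in hand, the ``main technical obstacle'' you describe disappears: both expansions are computed against the same null direction along $\gamma^*$, so there is no need for the inward perturbation family $\Sigma_\epsilon$ or the unjustified limit $\epsilon\to 0$. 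One then forms the smooth null hypersurface $\mathcal N_1$ generated by the outward future null normals of $\{0\}\times\Sigma_{\mathrm{in}}$ near $\gamma^*(0)$ and the smooth null hypersurface $\mathcal N_2$ generated by the past inward null normals of $\{t^*\}\times\Sigma_{\mathrm{out}}$ near $(t^*,q^*)$; both contain $\gamma^*$, and their intersections $\Sigma_1,\Sigma_2$ with the slice $\{t^*-\epsilon\}\times\Omega$ touch at a point with $\Sigma_2$ enclosing $\Sigma_1$. Raychaudhuri plus the null energy condition give $\theta^+_{\Sigma_1}\le0$ at the touching point while $\theta^+_{\Sigma_2}>0$, contradicting the strong comparison principle for $\theta^+$. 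As written, your argument never reaches a contradiction: the final paragraph is a plan, not a proof.

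A secondary gap: your limit-curve step asserts that $\gamma^*$ runs from a point of $\{0\}\times\Sigma_{\mathrm{in}}$ to $(t^*,q^*)$, but the limit-curve theorem only yields a past null geodesic from $(t^*,q^*)$ lying in the achronal boundary that either reaches $\{0\}\times\Sigma_{\mathrm{in}}$ or is past-inextendible. You must exclude the inextendible case; this is where the uniform bound on $|\mathbf g|_{h_0}$, the spacelikeness of the slices, and the time function enter again, via an estimate showing that any past null curve confined to $[0,t^*]\times\Omega$ has finite length with respect to a complete background metric and hence cannot be inextendible. The estimate is of the same flavor as your light-cone-width computation, but it needs to be stated: an inextendible generator would leave you with no point of $\Sigma_{\mathrm{in}}$ at which to invoke the trapping hypothesis.
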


Our proof of Theorem~\ref{thm:no-MOTS-alt} is elementary and does not depend on dimension. It is a Lorentzian analog of the following well-known theorem in Riemannian geometry, which follows from a classical argument of  T.~Frankel~\cite{Frankel:1966}. We thank Jose Espinar for pointing us toward Frankel's argument.

\begin{thm}\label{thm:Frankel}
Let $(\Omega, g)$ be a compact connected $C^2$ Riemannian manifold with boundary such that $\partial \Omega=\Sigma_{\mathrm{in}}\sqcup \Sigma_{\mathrm{out}}$ where $\Sigma_{\mathrm{in}}$ and $\Sigma_{\mathrm{out}}$ are both closed and nonempty. Suppose that $g$ has nonnegative Ricci curvature, and that $\Sigma_{\mathrm{out}}$ satisfies $H>0$ everywhere with respect to the normal pointing out of $\Omega$. Then $\Sigma_{\mathrm{in}}$ cannot satisfy  $H\le 0$ with respect to the normal pointing into $\Omega$. 
\end{thm}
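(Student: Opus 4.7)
\textbf{Proof plan for Theorem~\ref{thm:Frankel}.} The plan is to adapt T.~Frankel's classical second-variation argument. By compactness of $\Omega$ and disjointness of the closed sets $\Sigma_{\mathrm{in}}$ and $\Sigma_{\mathrm{out}}$, the distance $d := d_g(\Sigma_{\mathrm{in}},\Sigma_{\mathrm{out}}) > 0$ is realized by a unit-speed length-minimizing curve $\gamma:[0,d]\to \Omega$ from $p := \gamma(0)\in \Sigma_{\mathrm{in}}$ to $q := \gamma(d)\in \Sigma_{\mathrm{out}}$. Standard first-variation arguments show that $\gamma$ is a smooth geodesic whose interior lies in the open interior of $\Omega$, with $\dot\gamma(0)$ equal to the unit normal to $\Sigma_{\mathrm{in}}$ pointing into $\Omega$ and $\dot\gamma(d)$ equal to the unit normal to $\Sigma_{\mathrm{out}}$ pointing out of $\Omega$. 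Since $\dot\gamma$ is parallel and $T_p\Sigma_{\mathrm{in}} = \dot\gamma(0)^{\perp}$, $T_q\Sigma_{\mathrm{out}} = \dot\gamma(d)^{\perp}$, parallel transport along $\gamma$ maps $T_p\Sigma_{\mathrm{in}}$ isometrically onto $T_q\Sigma_{\mathrm{out}}$.

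Fix an orthonormal basis $\{e_i\}_{i=1}^{n-1}$ of $T_p\Sigma_{\mathrm{in}}$ and let $E_i$ denote its parallel translate along $\gamma$, so that $\{E_i(d)\}$ is an orthonormal basis of $T_q\Sigma_{\mathrm{out}}$. For each $i$, construct a smooth variation $\alpha_i(s,t)$ of $\gamma$ with $\alpha_i(s,0)\in \Sigma_{\mathrm{in}}$, $\alpha_i(s,d)\in \Sigma_{\mathrm{out}}$, and variation vector field $E_i$ along $\gamma$; this is possible because the required endpoint velocities $E_i(0)$ and $E_i(d)$ lie in the respective tangent spaces. By the minimizing property of $\gamma$, the arc length $L_i(s)$ of $\alpha_i(s,\cdot)$ satisfies $L_i''(0)\ge 0$.

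Applying the standard free-boundary second-variation formula and using that $E_i$ is parallel to annihilate the kinetic term,
\begin{align*}
L_i''(0) = -\int_0^d \langle R(E_i,\dot\gamma)\dot\gamma, E_i\rangle\, dt + \langle \nabla_{E_i}E_i, \dot\gamma\rangle\Big|_{t=d} - \langle \nabla_{E_i}E_i, \dot\gamma\rangle\Big|_{t=0}.
\end{align*}
At each endpoint, $E_i$ is tangent to the relevant boundary component and $\dot\gamma$ is the chosen unit normal $N$, so differentiating the identity $\langle E_i, N\rangle = 0$ yields $\langle \nabla_{E_i}E_i, N\rangle = -\langle E_i, \nabla_{E_i}N\rangle$. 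Summing over $i$ and using the paper's convention $H = \mathrm{div}_\Sigma N$, the boundary terms collapse to $-H_{\mathrm{out}}(q) + H'_{\mathrm{in}}(p)$, where $H_{\mathrm{out}}$ is the mean curvature of $\Sigma_{\mathrm{out}}$ with respect to the normal pointing out of $\Omega$ and $H'_{\mathrm{in}}$ is the mean curvature of $\Sigma_{\mathrm{in}}$ with respect to the normal pointing into $\Omega$. Altogether,
\[
0 \le \sum_{i=1}^{n-1} L_i''(0) = -\int_0^d \mathrm{Ric}(\dot\gamma,\dot\gamma)\, dt - H_{\mathrm{out}}(q) + H'_{\mathrm{in}}(p).
\]
The hypotheses $\mathrm{Ric}\ge 0$, $H_{\mathrm{out}}(q) > 0$, and $H'_{\mathrm{in}}(p) \le 0$ force the right-hand side to be strictly negative, a contradiction. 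The main technical care is matching the sign of each endpoint boundary term to the correctly oriented mean curvature under the paper's $H = \mathrm{div}_\Sigma N$ convention; existence, regularity, and perpendicularity of the minimizer are routine consequences of compactness and the first variation formula.
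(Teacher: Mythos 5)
Your proposal is correct and is precisely the classical second--variation argument of Frankel that the paper cites for Theorem~\ref{thm:Frankel} (the paper itself only references \cite{Frankel:1966} rather than reproducing the proof): a minimizing geodesic from $\Sigma_{\mathrm{in}}$ to $\Sigma_{\mathrm{out}}$ meets both components orthogonally with interior in $\Int\Omega$, and summing the free--boundary second variation over a parallel orthonormal frame yields $0\le -\int_\gamma \mathrm{Ric}(\dot\gamma,\dot\gamma)\,dt - H_{\mathrm{out}}(q)+H'_{\mathrm{in}}(p)<0$, a contradiction. The sign bookkeeping under the paper's convention $H=\Div_\Sigma N$ checks out (e.g.\ on a Euclidean annulus the formula gives $(n-1)(1/a-1/b)>0$, consistent with minimality), so no gap remains.
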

 R. Ichida~\cite{Ichida:1981} proved the stronger statement that if one instead assumes that $H_{\Sigma_{\mathrm{out}}}\ge0$ and $H_{\Sigma_{\mathrm{in}}}\le0$, then $(\Omega, g)$ splits as a product.

\begin{proof}[Proof of Theorem~\ref{thm:no-MOTS-alt}]
By assumption, there is a larger spacetime manifold without boundary, called $\mathbf{N}'$, which contains $\mathbf{N}$, 
and all of our causal theoretic definitions below take place within the spacetime $\mathbf{N}'$, though it will be apparent that the geometry of $\mathbf{N}'$ outside $\mathbf{N}$ is irrelevant in the proof. We also extend the Riemannian background metric $h_0$ on $\mathbf{N}$ to a \emph{complete} Riemannian metric on $\mathbf{N}'$.

Our first claim is that the timelike future of $\{0\}\times \Sigma_{\mathrm{in}}$, $I^+\big( \{0\}\times \Sigma_{\mathrm{in}}\big) $, intersects $(0,\infty)\times \Sigma_{\mathrm{out}}$. Let $\sigma$ be any curve with unit speed with respect to $g_0$ that connects $\Sigma_{\mathrm{in}}$ to $\Sigma_{\mathrm{out}}$ in $\Omega$. Let $c>0$ be a constant, and consider the curve $\tilde \sigma(s):=(cs, \sigma(s))$ in $\mathbf{N}$ connecting $\{0\}\times \Sigma_{\mathrm{in}}$ to $(0,\infty)\times \Sigma_{\mathrm{out}}$. Then 
\begin{equation}\label{eqn:timelike-curve}
\mathbf g(\tilde \sigma'(s),\tilde \sigma'(s)) = c^2  \mathbf g \left(\tfrac{\partial}{\partial t}, \tfrac{\partial}{\partial t}\right)
+ 2c \mathbf g \left(\tfrac{\partial}{\partial t}, \sigma'(s) \right) + \mathbf g( \sigma'(s), \sigma'(s)) \le  -c^2\epsilon +2c|\mathbf g|_{h_0} + |\mathbf g|_{h_0},
\end{equation}
where $\epsilon>0$ is the constant assumed in the hypotheses, and $\sigma'(s)$ is interpreted to lie in $T_{\tilde\sigma(s)}\mathbf N$.
Since $|\mathbf g|_{h_0}$ is uniformly bounded, the right side of~\eqref{eqn:timelike-curve} will be negative for sufficiently large~$c$. Thus $\tilde{\sigma}$ is future-timelike for sufficiently large~$c$, proving the claim.

Let $T$ be the infimum over all $t$ such that $\{t\}\times \Sigma_{\mathrm{out}}$ intersects $I^+\big( \{0\}\times \Sigma_{\mathrm{in}}\big)$. So there is some point $p\in \Sigma_{\mathrm{out}}$ such that $(T,p)\in \partial I^+ \big( \{0\}\times \Sigma_{\mathrm{in}}\big)$. In fact, $(T,p)$ can be written as the limit of ending points of future timelike curves that begin in  $\{0\}\times \Sigma_{\mathrm{in}}$ and stay inside $[0,T+1]\times \Omega$. (If the timelike curve crosses through the $[0,\infty)\times \Sigma_{\mathrm{in}}$ part of the boundary, then the first part of the curve up to the last time it intersects $[0,\infty)\times \Sigma_{\mathrm{in}}$ can be replaced by an integral curve of $\frac{\partial}{\partial t}$, and if the timelike curve crosses through the $[0,\infty)\times \Sigma_{\mathrm{out}}$ part, then the last part of the timelike curve after the first time it intersects $[0,\infty)\times \Sigma_{\mathrm{out}}$ can be discarded.)
By a standard limit curve argument, there exists a limit curve $\gamma$ of (the time-reversed version of) the above-described family of timelike curves such that 
$\gamma$ is a past null geodesic starting at $(T,p)$ and lying in $\partial I^+ ( \{0\}\times \Sigma_{\mathrm{in}})$ such that $\gamma$ either ends at $ \{0\}\times \Sigma_{\mathrm{in}}$ or is inextendible. See, for example, \cite[Theorem 2.56]{Minguzzi:2019} or \cite[Proposition 3.4]{Galloway:vienna}.
Moreover, since $\gamma$ is a limit curve of curves lying in the compact set $[0,T+1]\times \Omega$, $\gamma$ itself must lie in $[0, T+1]\times \Omega$ as well. In fact, 
$\gamma$ lies in $[0,T]\times \Omega$ because it begins at $(T, p)$ and is past null. 

We claim that $\gamma$ is not inextendible, and consequently, it ends at $ \{0\}\times \Sigma_{\mathrm{in}}$. To prove the claim, recall that inextendible curves must have infinite $h_0$-length, where $h_0$ is the complete Riemannian background metric on the spacetime $\mathbf N'$.  See, e.g. \cite[Lemma 2.17]{Minguzzi:2019}. Note that since each $\{t\}\times \Omega$ is assumed to be spacelike, in the compact space $[0, T]\times \Omega$ there must be a uniform constant $\delta>0$ with the property that $\mathbf{g}(v,v) \ge \delta g_0(v,v)$ for all tangent vectors $v\in T\mathbf N$ that are tangent to the $\Omega$ directions in the splitting $[0,\infty)\times \Omega$.
Writing $\gamma(s) = ( t(s), \sigma(s))$, the null condition says that
\begin{multline}
0 = \mathbf g(\gamma'(s),\gamma'(s)) = \left(\tfrac{dt}{ds}\right)^2  \mathbf g \left(\tfrac{\partial}{\partial t}, \tfrac{\partial}{\partial t}\right)
+ 2 \left(\tfrac{dt}{ds}\right) \mathbf g \left(\tfrac{\partial}{\partial t}, \sigma'(s) \right) + \mathbf g( \sigma'(s), \sigma'(s)) \\
\ge  - \left(\tfrac{dt}{ds}\right)^2 |\mathbf  g|_{h_0} - 2 \left|\tfrac{dt}{ds}\right| |\mathbf g|_{h_0} |\sigma'(s)|_{g_0} + \delta |\sigma'(s)|^2_{g_0}, 
\end{multline}
where $\sigma'(s)$ is interpreted to lie in $T_{\gamma(s)}\mathbf N$. Solving this quadratic inequality implies that $|\sigma'(s)|_{g_0}\le C\left|\frac{dt}{ds}\right| $, where $C$ depends on $\delta$ and the uniform bound on $|\mathbf g|_{h_0}$.  Therefore the $h_0$-length of $\gamma$ can be bounded in terms of 
$\int_\gamma \left|\frac{dt}{ds}\right|$, which is bounded by $T$ since $t$ is a time function, $\gamma$ is past pointing, and $\gamma$ lies in $[0,T]\times \Omega$. 
Hence $\gamma$ has finite $h_0$-length, establishing the claim. 

\begin{figure}
    \centering
    \includegraphics[width=0.7\textwidth]{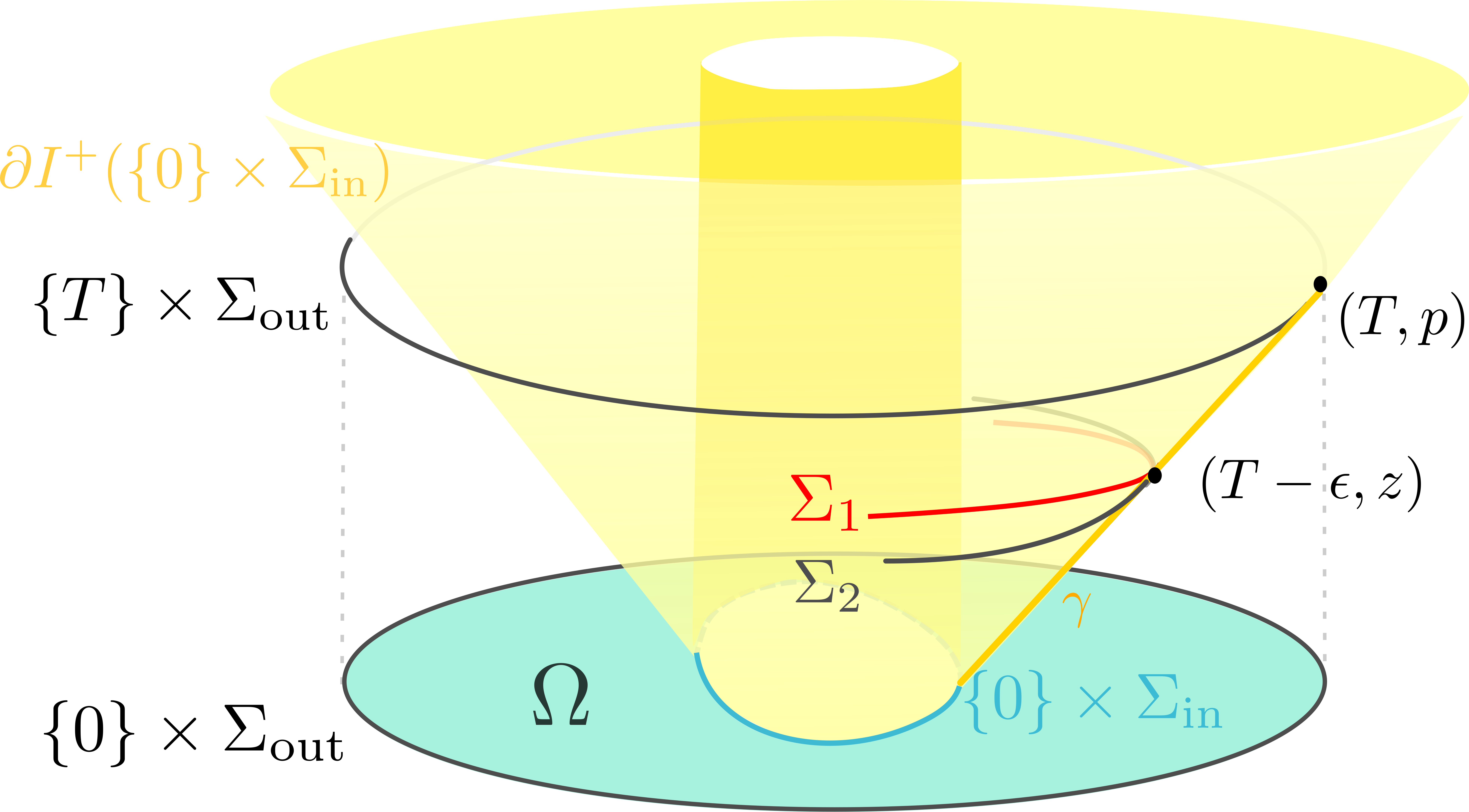}
    \caption{An illustration for the proof of Theorem~\ref{thm:no-MOTS-alt}. We first obtain a null geodesic $\gamma$ from $\{ 0 \}\times \Sigma_{\mathrm{in}}$ to $(T, p)\in \{ T \} \times \Sigma_{\mathrm{out}}$. Suppose on the contrary that $\theta_{\{ 0 \} \times \Sigma_{\mathrm{in}}}^+\le 0$. Then the Raychaudhuri equation and the null energy condition implies  $\theta_{\Sigma_1}^+\le 0$.  On the other hand, by construction,  $\theta^+_{\Sigma_2}>0$, and $\Sigma_2$ encloses  $\Sigma_1$ and is tangent to $\Sigma_1$ at $(T-\epsilon, z)$. This contradicts the strong comparison principle for $\theta^+$.
    } 
    \label{fig}
\end{figure}

With slight abuse of notation, let $\gamma:[0,a]\to \mathbf N$ denote the same geodesic described above, but with parameter reversed so that it is now a future null geodesic starting at $\gamma(0)\in \{0\}\times \Sigma_{\mathrm{in}}$ and ending at the point $\gamma(a)=(T, p)\in \{T\}\times \Sigma_{\mathrm{out}}$, 
lying entirely in $\partial I^+ ( \{0\}\times \Sigma_{\mathrm{in}})$.

 By a standard argument (see, e.g. \cite[Lemma 50, p. 298]{O'Neill:1983}), $\gamma$ must leave $\{0\}\times \Sigma_{\mathrm{in}}$ in an outward future null normal direction and not have any focal points in its interior (with respect to the surface $\{0\}\times \Sigma_{\mathrm{in}}$).  Moreover, the construction of $T$ guarantees that $\gamma$ is pointing in the outward future null normal direction when it hits $\{T\}\times \Sigma_{\mathrm{out}}$ at the point $(T, p)$ and that there are no focal points of $\{ T\}\times \Sigma_{\mathrm{out}}$ along $\gamma|_{(0, a]}$. (For otherwise, in either case there is a timelike curve from $\{ T\}\times \Sigma_{\mathrm{out}}$ to $\gamma(0)$, which contradicts the minimality of $T$.)

 It follows that the normal  exponential map on the normal bundle of $\{ 0 \} \times \Sigma_{\mathrm{in}}$ is nonsingular along $\gamma|_{[0, a)}$ \cite[Proposition 30, p. 283]{O'Neill:1983}, and thus $\gamma|_{[0, a)}$ is contained in a smooth null hypersurface, denoted by $\mathcal N_1$, generated by outward future  null normal geodesics starting from a small neighborhood of $\gamma(0)$ in $\{ 0 \}\times \Sigma_{\mathrm{in}}$. Similarly, there is a smooth null hypersurface, denoted by $\mathcal N_2$, generated by  past inward null normal geodesics starting from  a neighborhood of $\gamma(a)$ in $\{T\}\times \Sigma_{\mathrm{out}}$. Note that $\gamma$ lies in both $\mathcal N_1$ and $\mathcal N_2$.

Since the spacelike $t$-level sets must intersect the null hypersurfaces $\mathcal N_1$ and $\mathcal N_2$ transversely, the intersections are all smooth.
For sufficiently small $\epsilon>0$, let  $\Sigma_2$ be the smooth $t=T-\epsilon$
 slice of $\mathcal N_2$ with $\theta^+_{\Sigma_2}>0$ (because of our assumption that $\theta^+_{\{T\}\times \Sigma_{\mathrm{out}}}>0$), and let $\Sigma_1$ be the smooth  $t=T-\epsilon$ slice of $\mathcal N_1$.  By construction, $\gamma$ must intersect both $\Sigma_1$ and $\Sigma_2$ at some point $(T-\epsilon, z)$. Moreover, the minimality of $T$ guarantees that $\Sigma_2$ is ``outside'' of $\Sigma_1$ as hypersurfaces of $\{T-\epsilon\}\times \Omega$.%, while $(T-\epsilon, q)\in \Sigma_1\cap\Sigma_2$.

 Now assume that $\{0\}\times \Sigma_{\mathrm{in}}$ is weakly outer trapped, that is, $\theta^+_{\{0\}\times \Sigma_{\mathrm{in}}}\le0$. 
 Using a standard argument involving the Raychaudhuri equation and the null energy condition,  
 it follows that $\theta^+_{\Sigma_1}\le0$ at $(T-\epsilon, z)$. Since we already saw that $\theta^+_{\Sigma_2}>0$ at $(T-\epsilon, z)$ and $\Sigma_2$ encloses $\Sigma_1$, this contradicts the strong comparison principle for $\theta^+$ (see, for example, \cite[Proposition~3.1]{Ashtekar-Galloway:2005}).
\end{proof}

\begin{proof}[Proof of  Theorem~\ref{thm:no-MOTS-stationary}]
Let $\mathbf g$ be the Lorentzian metric and $\frac{\partial}{\partial t}$ be the global timelike Killing vector from the hypotheses of Theorem~\ref{thm:no-MOTS-stationary}. We verify that they satisfy the assumptions of Theorem~\ref{thm:no-MOTS-alt}. Let $h_0 := dt^2 + g_0$ where $g_0$ is the induced Riemannian metric of $\mathbf g$ on $\{ 0 \}\times \Omega$. Since $\frac{\partial}{\partial t}$ is a global timelike Killing vector field, the metric $\mathbf g$ is invariant in $t$, and thus $|\mathbf g|_{h_0}$ is uniformly bounded on $\mathbf N$ and the $t$-slices stay spacelike.  The compactness of $\Omega$ implies that   $\mathbf g(\tfrac{\partial}{\partial t}, \tfrac{\partial}{\partial t})\le -\epsilon <0$ for a uniform constant $\epsilon$ on $\{ 0\}\times \Omega$ and hence on all of $\mathbf N$.
\end{proof}

%While our main application of Theorem~\ref{thm:no-MOTS-alt} was to the case of vacuum Killing developments and Corollary~\ref{cor:Frankel} is just the case of a trivial product, the ``intermediate'' case of vacuum static spacetimes gives us another immediate corollary. (See \cite{Miao:2005} for an alternative proof in $3$ dimensions.)
%\begin{cor}
%Let $(M, g)$ be a complete vacuum static asymptotically flat manifold with boundary, whose static potential is positive everywhere. (That is, $g$ is scalar-flat, and there exists $f>0$ on $M$ such that  $\nabla^2 f = f\mathrm{Ric}_g$.) Then $\partial M$ cannot have $H\le0$ everywhere (or in other words, it cannot be weakly outer trapped) with respect to the ``outward'' normal pointing into $M$.
%\end{cor}

\bibliographystyle{alpha}
\bibliography{references}

\end{document}